\def\bfB{\mathbf{B}}
\def\car{\text{car}}
\def\Mat{\text{M}}
\def\End{\text{End}}
\def\id{\text{id}}
\def\defterm{\textbf}
\renewcommand{\L}{\mathbb{L}}
\newcommand{\Sp}{\operatorname{Sp}}
\newcommand{\Ker}{\operatorname{Ker}}
\newcommand{\im}{\operatorname{Im}}
\newcommand{\rg}{\operatorname{rk}}
\renewcommand{\setminus}{\smallsetminus}
\def\K{\mathbb{K}}
\def\N{\mathbb{N}}
\renewcommand{\L}{\mathbb{L}}
\def\calA{\mathcal{A}}
\def\calB{\mathcal{B}}
\def\calC{\mathcal{C}}
\def\lcro{\mathopen{[\![}}
\def\rcro{\mathclose{]\!]}}
\theoremstyle{definition}
\newtheorem{Def}{Definition}
\newtheorem{Not}[Def]{Notation}
\theoremstyle{plain}
\newtheorem{theo}{Theorem}
\newtheorem{prop}[theo]{Proposition}
\newtheorem{cor}[theo]{Corollary}
\newtheorem{lemme}[theo]{Lemma}
\theoremstyle{remark}
\newtheorem{Rems}{Remarks}
\newtheorem{Rem}[Rems]{Remark}
\title{On linear combinations of two idempotent matrices over an arbitrary field}
\author{Cl\'ement de Seguins Pazzis \footnote{Teacher at Lyc\'ee Priv\'e Sainte-Genevi\`eve, 2, rue
de l'\'Ecole des Postes, 78029 Versailles Cedex, FRANCE.}
\footnote{e-mail address: dsp.prof@gmail.com}}
\begin{document}

\maketitle

\begin{abstract}
Given an arbitrary field $\K$ and non-zero scalars $\alpha$ and $\beta$, we give necessary and sufficient conditions for
a matrix $A \in \Mat_n(\K)$ to be a linear combination of two idempotents with coefficients
$\alpha$ and $\beta$. This extends results previously obtained by Hartwig and Putcha
in two ways: the field $\K$ considered here is arbitrary (possibly of characteristic $2$), and
the case $\alpha \neq \pm \beta$ is taken into account.
\end{abstract}

\vskip 2mm
\noindent
\emph{AMS Classification:} 15A24; 15A23.

\vskip 2mm
\noindent
\emph{Keywords:} matrices, idempotents, decomposition, elementary factors, Jordan reduction.

\section{Introduction}

In this article, $\K$ will denote an arbitrary field, $\car (\K)$ its characteristic,
and $n$ a positive integer. We choose an algebraic closure of $\K$ which we denote by $\overline{\K}$.
We let $E$ denote a vector space of dimension $n$ over $\K$, and $\End(E)$ denote the algebra of endomorphisms
of $E$. We choose two scalars $\alpha$ and $\beta$ in $\K^*$.

\vskip 2mm
\noindent
An idempotent matrix of $\Mat_n(\K)$ is a matrix $P$
verifying $P^2=P$, i.e. idempotent matrices represent projectors in finite-dimensional
vector spaces. Of course, any matrix similar to an idempotent is itself an idempotent.

\begin{Def}
Let $\calA$ be a $\K$-algebra. An element $x \in \calA$ will be called an
\textbf{$(\alpha,\beta)$-composite} when there are two idempotents $p$ and $q$
such that $x=\alpha.p+\beta.q$.
\end{Def}

The purpose of this paper is to give necessary and sufficient conditions
on a matrix $A \in \Mat_n(\K)$ to be an $(\alpha,\beta)$-composite, both in terms
of Jordan reduction and elementary factors.
This will generalize the two cases $(\alpha,\beta)=(1,-1)$ and $(\alpha,\beta)=(1,1)$
already discussed in \cite{HP} when the field $\K$ is algebraically closed and $\car(\K) \neq 2$.

\begin{Rems}\label{elremark}
\begin{enumerate}[(i)]
\item Any matrix similar to an $(\alpha,\beta)$-composite is an $(\alpha,\beta)$-composite itself.
\item If $A\in \Mat_n(\K)$ and $B \in \Mat_p(\K)$ are $(\alpha,\beta)$-composites, then
the block diagonal matrix $\begin{bmatrix}
A & 0 \\
0 & B
\end{bmatrix}$ is clearly an $(\alpha,\beta)$-composite itself.
\item The matrix $A \in \Mat_n(\K)$ is an $(\alpha,\beta)$-composite iff $A-\alpha.I_n$ is a $(-\alpha,\beta)$-composite.
\end{enumerate}
\end{Rems}

\begin{Not}
When $A$ is a matrix of $\Mat_n(\K)$, $\lambda \in \overline{\K}$ and $k \in \N^*$, we denote by
$$n_k(A,\lambda):=\dim \Ker (A-\lambda.I_n)^k-\dim \Ker (A-\lambda.I_n)^{k-1},$$
i.e. $n_k(A,\lambda)$ is the number of blocks of size greater or equal to $k$
for the eigenvalue $\lambda$ in the Jordan reduction of $A$
(in particular, it is zero when $\lambda$ is not an eigenvalue of $A$).
We extend this notation to an endomorphism of $E$ provided $\lambda \in \K$.
We also denote by $j_k(A,\lambda)$ the number of size $k$ for the eigenvalue $\lambda$ in the Jordan reduction of $A$.
\end{Not}

\begin{Def}
Two sequences $(u_k)_{k \geq 1}$ and $(v_k)_{k \geq 1}$ are said to be \defterm{intertwined}
when:
$$\forall k \in \N^*, \; v_k \leq u_{k+1} \quad \text{and} \quad u_k \leq v_{k+1.}$$
\end{Def}

\begin{Not}
Let $u \in \End(E)$
and $\Lambda$ be a subset of $\K$.
The minimal polynomial of $u$ splits as
$\mu_u(X)=P(X)\,Q(X)$, where $P$ is a monic polynomial with all its roots in $\Lambda$, and
$Q$ is monic and has no root in $\Lambda$.
We then set
$$u_\Lambda:=u_{|\Ker P(u)} \in \End(\Ker P(u)) \quad \text{and} \quad
u_{-\Lambda}:=u_{|\Ker Q(u)} \in \End(\Ker Q(u)).$$
\end{Not}

\noindent Thus $u_{\Lambda}$ is triangularizable with all eigenvalues in $\Lambda$, whereas
$u_{-\Lambda}$ has no eigenvalue in $\Lambda$.
The kernel decomposition theorem ensures that $u = u_{\Lambda} \oplus u_{-\Lambda}$.
Finally, with $n=\dim E$, the map $u_{\Lambda}$ is an endomorphism of
$\underset{\lambda \in \Lambda}{\bigoplus}\Ker (u-\lambda.\id_E)^n$.

\vskip 4mm
We are now ready to state our main theorems. We will start by generalizations of the Hartwig and Putcha
results on differences of idempotents:

\begin{theo}\label{HPdiff}
Assume $\car(\K) \neq 2$ and let $A \in \Mat_n(\K)$.
Then $A$ is an $(\alpha,-\alpha)$-composite iff all the following conditions hold:
\begin{enumerate}[(i)]
\item The sequences $(n_k(A,\alpha))_{k \geq 1}$ and $(n_k(A,-\alpha))_{k \geq 1}$ are intertwined.
\item $\forall \lambda \in \overline{\K} \setminus \{0,\alpha,-\alpha\}, \; \forall k \in \N^*, \; j_k(A,\lambda)=j_k(A,-\lambda)$.
\end{enumerate}
\end{theo}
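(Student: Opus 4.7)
My plan is to reduce the problem to the existence of a single auxiliary operator. Writing $A = \alpha p - \alpha q$ with $p, q$ idempotent and setting $T := p + q - \id_E$, a direct computation (valid since $\car(\K) \neq 2$) shows that the pair of idempotent identities $p^2 = p$, $q^2 = q$ is equivalent to the pair
\[ T^2 = \id_E - \alpha^{-2} A^2, \qquad TA + AT = 0, \]
with inverse transformation $p = \tfrac12(\id_E + T + \alpha^{-1}A)$ and $q = \tfrac12(\id_E + T - \alpha^{-1}A)$. So the theorem reduces to characterizing those $A$ for which such a companion $T$ exists.

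For necessity, I would analyze the action of $T$ on the generalized eigenspace decomposition of $A$ over $\overline{\K}$. The anticommutation $TA = -AT$ forces $T$ to carry $V_\lambda := \Ker(A - \lambda.\id_E)^n$ into $V_{-\lambda}$ for every $\lambda \in \overline{\K}$. For $\lambda \notin \{0,\alpha,-\alpha\}$, the restriction of $T^2$ to $V_\lambda \oplus V_{-\lambda}$ is invertible (its spectrum reduces to $1 - \lambda^2/\alpha^2 \neq 0$), so $T$ restricts to an isomorphism $V_\lambda \cong V_{-\lambda}$ intertwining $A$ with $-A$, which yields condition (ii). On the ``resonant'' part $V_\alpha \oplus V_{-\alpha}$, writing $T$ in block-antidiagonal form with off-diagonal blocks $T_\pm$ and $A = (\alpha\,\id + N) \oplus (-\alpha\,\id + N')$ with $N, N'$ nilpotent, the identities translate to Sylvester-type relations $T_+N + N'T_+ = 0$ and $T_-N' + NT_- = 0$, together with $T_-T_+ = -\alpha^{-2}N(2\alpha\,\id + N)$ and $T_+T_- = -\alpha^{-2}N'(2\alpha\,\id + N')$. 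Since $2\alpha\,\id + N$ and $2\alpha\,\id + N'$ are invertible, both composites $T_-T_+$ and $T_+T_-$ are forced to have the same Jordan type as $N$ and $N'$ respectively; extracting condition (i) then reduces to tracking the dimensions of $T_\pm(\Ker N^k)$ inside $\Ker (N')^k$ and vice versa.

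For sufficiency I would proceed by block decomposition: by Remark~\ref{elremark}(ii), it suffices to split $A$, up to similarity over $\K$ via the kernel decomposition applied to suitable factorizations of its minimal polynomial, into elementary blocks each of which is explicitly an $(\alpha,-\alpha)$-composite. Three types arise: (a) the nilpotent part, on each Jordan block of which $T$ can be taken as the product of an alternating sign-diagonal $D$ (ensuring $DN = -ND$) and the canonical square root $(\id_E - \alpha^{-2}A^2)^{1/2}$, which is a polynomial in $A^2$ because $A^2$ is nilpotent there; (b) the ``paired'' blocks corresponding to pairs $\{\lambda, -\lambda\}$ of eigenvalues in $\overline{\K} \setminus \{0, \pm\alpha\}$, which condition (ii) allows to bring over $\K$ into a form $B \oplus (-B)$, realized via a swap-type $T$; (c) the elementary pieces of the $V_\alpha \oplus V_{-\alpha}$ part of shapes $J_m(\alpha) \oplus J_m(-\alpha)$, $J_{m+1}(\alpha) \oplus J_m(-\alpha)$, and $J_m(\alpha) \oplus J_{m+1}(-\alpha)$, each admitting a direct $(p,q)$-realization by hand. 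The main obstacle, and the technical heart of the proof, will be the $\pm\alpha$ analysis: establishing that the intertwining condition (i) precisely characterizes when the $\{\alpha,-\alpha\}$-part decomposes into these three elementary types, and confirming each such type is composite.
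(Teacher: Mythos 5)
Your reformulation of $A=\alpha p-\alpha q$ as the existence of an operator $T=p+q-\id_E$ with $T^2=\id_E-\alpha^{-2}A^2$ and $TA+AT=0$ is correct (it is essentially the Hartwig--Putcha substitution, and quite different from the paper's route, which goes through elementary factors, companion matrices of polynomials of $X^2-\alpha^2$, and a reduction principle based on commutation with $(A-\alpha I)(A+\alpha I)$). The non-resonant necessity argument via $T\colon V_\lambda\to V_{-\lambda}$ is sound. However, your sufficiency step (b) contains a genuine error: condition (ii) does \emph{not} allow you to bring $A_{-\{0,\alpha,-\alpha\}}$ into the form $B\oplus(-B)$ over $\K$ when $\K$ is not algebraically closed. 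Take $\K=\Q$, $\alpha=1$ and $A=C(X^2-2)$: condition (ii) holds (the eigenvalues $\pm\sqrt2$ pair up over $\overline{\K}$) and $A$ is in fact a $(1,-1)$-composite, yet $A$ is similar over $\Q$ to no matrix of the form $B\oplus(-B)$, since that would force $\sqrt2\in\Q$. The similarity-invariant statement is that the elementary factors are even polynomials, and one must then prove directly that a companion matrix $C(Q(X^2))$ is a composite; this is exactly the content of the paper's lemma identifying $C\bigl(P((X-\alpha)(X-\beta))\bigr)$ with a $2\times 2$-block matrix. Your swap-type $T$ only exists after extending scalars, and descent of compositeness is not available before the theorem itself is proved.

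The second gap is that the whole difficulty is concentrated in the $\{\alpha,-\alpha\}$ part, where you only gesture. For necessity, your relations give $T_-T_+=Nu$ and $T_+T_-=N'u'$ with $u,u'$ invertible and commuting with $N$, $N'$ (note the sign slip: $T_+T_-=\alpha^{-2}N'(2\alpha\,\id_E-N')$, not $-\alpha^{-2}N'(2\alpha\,\id_E+N')$), so the intertwining of $(n_k(A,\alpha))$ and $(n_k(A,-\alpha))$ is precisely the Flanders-type comparison of the nilpotent Jordan structures of $XY$ and $YX$. ``Tracking the dimensions of $T_\pm(\Ker N^k)$'' is not a proof of this; the paper deliberately avoids citing Flanders and devotes its Intertwinement Lemma --- a delicate rank computation producing matrices $A_1,A_2$ with $r\leq\rg(A_1)+\rg(A_2)\leq q$ --- to establishing exactly this inequality from scratch. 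Likewise, for sufficiency, the claim that intertwining permits a decomposition into the three elementary shapes $J_m(\alpha)\oplus J_m(-\alpha)$, $J_{m+1}(\alpha)\oplus J_m(-\alpha)$, $J_m(\alpha)\oplus J_{m+1}(-\alpha)$ requires the inductive peeling-off of the largest blocks that the paper carries out, and the unbalanced shapes need the bordering construction with $C_1\in\im P$, $C_2\in\Ker Q$. Your architecture is viable, but as written it omits or gets wrong precisely the steps that carry the mathematical content.
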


\begin{theo}\label{HPdiffendo}
Assume $\car(\K) \neq 2$ and let $u$ be an endomorphism of $E$.
Then $u$ is an $(\alpha,-\alpha)$-composite iff all the following conditions hold:
\begin{enumerate}[(i)]
\item The sequences $\bigl(n_k(u,\alpha)\bigr)_{k \geq 1}$ and $\bigl(n_k(u,-\alpha)\bigr)_{k \geq 1}$ are intertwined.
\item The elementary factors of $u_{-\{0,\alpha,-\alpha\}}$ are all \textbf{even} polynomials
(i.e. polynomials of $X^2$).
\end{enumerate}
\end{theo}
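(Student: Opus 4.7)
The approach is to deduce Theorem~\ref{HPdiffendo} from Theorem~\ref{HPdiff} by passing to coordinates. Choosing a basis of $E$ identifies $u$ with a matrix $A \in \Mat_n(\K)$; the property of being an $(\alpha,-\alpha)$-composite is clearly preserved under this identification (idempotency transfers), and since $\alpha,-\alpha \in \K$, the integers $n_k(u,\pm\alpha)$ coincide with $n_k(A,\pm\alpha)$, so condition~(i) of the two theorems is literally the same statement.

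The heart of the matter is therefore to show that condition~(ii) of Theorem~\ref{HPdiffendo} (every elementary divisor of $u_{-\{0,\alpha,-\alpha\}}$ lies in $\K[X^2]$) is equivalent to condition~(ii) of Theorem~\ref{HPdiff} (the equality $j_k(A,\lambda)=j_k(A,-\lambda)$ for every $\lambda \in \overline{\K}\setminus\{0,\alpha,-\alpha\}$ and every $k \geq 1$). Both sides depend only on $u_{-\{0,\alpha,-\alpha\}}$, since the complementary summand $u_{\{0,\alpha,-\alpha\}}$ contributes Jordan blocks only at the eigenvalues $0,\alpha,-\alpha$ and its elementary divisors are powers of $X$, $X-\alpha$, $X+\alpha$.

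To establish this equivalence I would argue irreducible factor by irreducible factor. Fix a monic irreducible $P \in \K[X]$ with no root in $\{0,\alpha,-\alpha\}$ and an elementary divisor $P^k$ of $u$. Over $\overline{\K}$, the $u$-invariant subspace attached to $P^k$ contributes exactly one Jordan block at each distinct root of $P$, all of a common size $s$ (equal to $k$ in the separable case, or $k$ times the inseparable degree of $P$ when $\car(\K)>0$). The polynomial $P(X)^k$ belongs to $\K[X^2]$ if and only if the multiset of roots of $P$ in $\overline{\K}$ is stable under $\lambda\mapsto -\lambda$. Since distinct monic irreducibles in $\K[X]$ have disjoint root sets in $\overline{\K}$, summing these local contributions shows that the global condition $j_k(A,\lambda)=j_k(A,-\lambda)$ on $\overline{\K}\setminus\{0,\alpha,-\alpha\}$ decouples into independent conditions, one per irreducible $P$, each equivalent to the evenness of the corresponding elementary divisors $P^k$.

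The main technical subtlety is the careful handling of possibly inseparable irreducible factors in positive odd characteristic, where the inseparable degree of $P$ must be correctly tracked when relating an elementary divisor $P^k$ over $\K$ to the corresponding Jordan blocks over $\overline{\K}$. Once this bookkeeping is in place, applying Theorem~\ref{HPdiff} to $A$ yields Theorem~\ref{HPdiffendo} immediately.
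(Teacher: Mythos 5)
Your proposal runs in the wrong direction and, within this paper, is circular. Here Theorem~\ref{HPdiff} is not an available input: Section~3 \emph{derives} Theorem~\ref{HPdiff} \emph{from} Theorem~\ref{HPdiffendo}, and it is Theorem~\ref{HPdiffendo} that receives the substantive proof, via the reduction principle (Proposition~\ref{redprinc}), the analysis of $u_{-\{0,\alpha,-\alpha\}}$ by companion matrices (Proposition~\ref{not21}), the intertwinement criterion for eigenvalues in $\{\alpha,-\alpha\}$ (Proposition~\ref{alphabeta}) and Proposition~\ref{nilpotent} for the nilpotent part. You cannot substitute the Hartwig--Putcha result for Theorem~\ref{HPdiff}, since that covers only algebraically closed fields, and being an $(\alpha,-\alpha)$-composite is not \emph{a priori} invariant under extension of scalars --- that invariance is stated in the paper as a \emph{consequence} of the theorems, not as a tool. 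So the heart of the matter is not the dictionary between conditions~(ii); it is the matrix-theoretic work of Sections~4--7, which your proposal does not address at all.

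Moreover, even the dictionary you propose between the two conditions~(ii) is incorrect as stated. The ``elementary factors'' of this paper are the invariant factors (see the proof of Proposition~\ref{not21}, where the $P_i$ satisfy $P_i \mid P_{i+1}$), and the equivalence with the Jordan symmetry $j_k(A,\lambda)=j_k(A,-\lambda)$ does \textbf{not} decouple irreducible by irreducible. If $P$ is irreducible and $-\lambda$ is a root of a \emph{different} irreducible $\widetilde{P}(X)=\pm P(-X)$, the symmetry condition links the elementary divisors $P^k$ and $\widetilde{P}^k$ without forcing either to be even. Concretely, over $\Q$ with $\alpha=1$, the matrix $\operatorname{diag}(2,-2)$ has elementary divisors $X-2$ and $X+2$, neither of which lies in $\Q[X^2]$, yet its invariant factor $X^2-4$ is even, the Jordan symmetry holds, and the matrix is a difference of two idempotents; your per-irreducible criterion would wrongly reject it. The correct translation is exactly the Proposition opening Section~3 of the paper, whose proof pairs $(X-\delta)^{r_\delta}$ with $(X+\delta)^{r_\delta}$ inside the minimal polynomial and then proceeds by downward induction on the invariant factors --- a genuinely global argument, not a local one.
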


\noindent Using Remark \ref{elremark}.(iii), the previous theorems lead to a characterization of
$(\alpha,\alpha)$-composites when $\car(\K) \neq 2$.

\begin{theo}\label{HPsum}
Assume $\car(\K) \neq 2$ and let $A \in \Mat_n(\K)$.
Then $A$ is an $(\alpha,\alpha)$-composite iff all the following conditions hold:
\begin{enumerate}[(i)]
\item The sequences $(n_k(A,0))_{k \geq 1}$ and $(n_k(A,2\,\alpha))_{k \geq 1}$
are intertwined.
\item $\forall \lambda \in \overline{\K} \setminus \{0,\alpha,2\alpha\}, \; \forall k \in \N^*, \;
j_k(A,\lambda)=j_k(A,2\alpha-\lambda)$.
\end{enumerate}
\end{theo}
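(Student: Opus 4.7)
The plan is to obtain Theorem \ref{HPsum} as a direct corollary of Theorem \ref{HPdiff}, using the translation trick provided by Remark \ref{elremark}.(iii). First I would observe that an $(\alpha,\alpha)$-composite and an $(\alpha,-\alpha)$-composite (equivalently a $(-\alpha,\alpha)$-composite, since the definition is symmetric in the two coefficients) are linked by the relation: $A$ is an $(\alpha,\alpha)$-composite if and only if $A-\alpha I_n$ is a $(-\alpha,\alpha)$-composite, which is the same as saying that $A-\alpha I_n$ is an $(\alpha,-\alpha)$-composite. So the problem reduces to applying Theorem \ref{HPdiff} to the matrix $B:=A-\alpha I_n$.

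Next I would translate the two conditions of Theorem \ref{HPdiff} applied to $B$ back into conditions on $A$. Since the Jordan blocks of $B$ for the eigenvalue $\mu$ correspond bijectively to those of $A$ for the eigenvalue $\mu+\alpha$, we have
\[
n_k(B,\alpha)=n_k(A,2\alpha), \quad n_k(B,-\alpha)=n_k(A,0),
\]
and for every $\lambda\in\overline{\K}$,
\[
j_k(B,\lambda)=j_k(A,\lambda+\alpha), \quad j_k(B,-\lambda)=j_k(A,\alpha-\lambda).
\]
Condition (i) of Theorem \ref{HPdiff} applied to $B$ then reads: the sequences $\bigl(n_k(A,0)\bigr)_{k\geq 1}$ and $\bigl(n_k(A,2\alpha)\bigr)_{k\geq 1}$ are intertwined, which is exactly condition (i) of Theorem \ref{HPsum} (intertwining being a symmetric relation).

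Finally, for condition (ii) I would set $\mu=\lambda+\alpha$, so that $\alpha-\lambda=2\alpha-\mu$. The map $\lambda\mapsto\mu$ is a bijection of $\overline{\K}$ which sends $\overline{\K}\setminus\{0,\alpha,-\alpha\}$ onto $\overline{\K}\setminus\{\alpha,2\alpha,0\}$. Hence the condition ``$j_k(B,\lambda)=j_k(B,-\lambda)$ for every $\lambda\in\overline{\K}\setminus\{0,\alpha,-\alpha\}$ and every $k$'' becomes ``$j_k(A,\mu)=j_k(A,2\alpha-\mu)$ for every $\mu\in\overline{\K}\setminus\{0,\alpha,2\alpha\}$ and every $k$'', which is precisely condition (ii) of Theorem \ref{HPsum}.

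There is no real obstacle here: the whole argument is a verification that the affine substitution $\lambda\mapsto\lambda+\alpha$ transforms the intertwining/symmetry conditions of Theorem \ref{HPdiff} into the stated conditions of Theorem \ref{HPsum}. The only point to be slightly careful about is checking that the excluded set of eigenvalues $\{0,\alpha,-\alpha\}$ translates correctly to $\{0,\alpha,2\alpha\}$ under the shift, which is what makes the asymmetry $\{0,\alpha,2\alpha\}$ (rather than a set symmetric about $\alpha$) appear naturally in the statement.
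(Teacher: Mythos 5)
Your proposal is correct and follows exactly the route the paper intends: the paper derives Theorem \ref{HPsum} from Theorem \ref{HPdiff} via Remark \ref{elremark}.(iii), and your translation of the intertwining and Jordan-block conditions under the shift $\lambda \mapsto \lambda + \alpha$ (including the excluded set $\{0,\alpha,-\alpha\}$ becoming $\{0,\alpha,2\alpha\}$) is precisely the verification that is left implicit there.
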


\begin{theo}
Assume $\car(\K) \neq 2$ and let $u\in \End(E)$.
Then $u$ is an $(\alpha,\alpha)$-composite iff both of the following conditions hold:
\begin{enumerate}[(i)]
\item The sequences $\bigl(n_k(u,0)\bigr)_{k \geq 1}$ and $\bigl(n_k(u,2\alpha)\bigr)_{k \geq 1}$ are intertwined.
\item The elementary factors of $u_{-\{0,\alpha,2\alpha\}}$ are polynomials of $(X-\alpha)^2$.
\end{enumerate}
\end{theo}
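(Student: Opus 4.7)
The plan is to deduce this statement from Theorem \ref{HPdiffendo} by means of the translation trick of Remark \ref{elremark}.(iii), exactly as Theorem \ref{HPsum} was derived from Theorem \ref{HPdiff} in the matrix setting. Since the $(\alpha,\beta)$-composite property is symmetric in $(\alpha,\beta)$ (just swap the two idempotents), being an $(\alpha,\alpha)$-composite is equivalent to being an $(-\alpha,\alpha)$-composite after subtracting $\alpha.\id_E$, which in turn is the same as being an $(\alpha,-\alpha)$-composite. Setting $v := u-\alpha.\id_E$, the problem reduces to applying Theorem \ref{HPdiffendo} to $v$ and translating its two conditions back to conditions on $u$.

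First I would handle condition (i). Since $v-\lambda.\id_E = u-(\lambda+\alpha).\id_E$, the Jordan structures of $v$ and $u$ are related by the eigenvalue shift $\lambda \mapsto \lambda+\alpha$, so $n_k(v,\alpha)=n_k(u,2\alpha)$ and $n_k(v,-\alpha)=n_k(u,0)$ for every $k\in \N^*$. Thus the intertwining condition on the sequences attached to $v$ at $\pm\alpha$ translates verbatim into the intertwining condition on the sequences attached to $u$ at $0$ and $2\alpha$.

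Next I would tackle condition (ii), which is the only mildly delicate step. The subset $\{0,\alpha,-\alpha\}$ of eigenvalues of $v$ corresponds, via the shift, to the subset $\{\alpha,2\alpha,0\}$ of eigenvalues of $u$, so the stable subspace on which $v_{-\{0,\alpha,-\alpha\}}$ is defined coincides with the one on which $u_{-\{0,\alpha,2\alpha\}}$ is defined, and on this common subspace we simply have $v_{-\{0,\alpha,-\alpha\}} = u_{-\{0,\alpha,2\alpha\}} - \alpha.\id$. Since replacing an endomorphism by a scalar translate transforms each elementary factor $P(X)$ into $P(X+\alpha)$, the elementary factors of $v_{-\{0,\alpha,-\alpha\}}$ are precisely the polynomials $P(X+\alpha)$ where $P$ ranges over the elementary factors of $u_{-\{0,\alpha,2\alpha\}}$. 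The condition that $P(X+\alpha)$ be an even polynomial, i.e.\ a polynomial of $X^2$, is then rewritten by substituting $X\mapsto X-\alpha$ as the condition that $P(X)$ be a polynomial of $(X-\alpha)^2$, which is exactly condition (ii) of the statement.

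The hard part is essentially only conceptual, namely to verify cleanly that elementary factors transform under scalar translation by the substitution $X\mapsto X+\alpha$ (which follows at once from the rational canonical form, since a cyclic $\K[X]$-module with annihilator $P(X)$ for $v$ becomes a cyclic module with annihilator $P(X+\alpha)$ for $u = v + \alpha.\id$). Everything else is bookkeeping on eigenvalue shifts, and no further computation is needed.
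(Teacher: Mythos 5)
Your proposal is correct and takes essentially the same route as the paper: the paper obtains this theorem from Theorem \ref{HPdiffendo} precisely by the translation $u\mapsto u-\alpha.\id_E$ of Remark \ref{elremark}.(iii) (combined with the symmetry of the roles of the two idempotents), which is exactly your argument. You simply spell out the bookkeeping — the shift of the $n_k$ sequences and the substitution $X\mapsto X+\alpha$ on elementary factors — that the paper leaves implicit, and those details are right.
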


\noindent The case $\car(\K)=2$ works rather differently in terms of Jordan reduction:

\begin{theo}\label{HPsumcar2}
Assume $\car(\K)=2$ and let $A \in \Mat_n(\K)$.
Then $A$ is an $(\alpha,-\alpha)$-composite iff
for every $\lambda \in \overline{\K} \setminus \{0,\alpha\}$,
all blocks in the Jordan reduction of $A$ with respect to $\lambda$ have an even
size.
\end{theo}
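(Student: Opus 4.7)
The plan is to reduce to $\alpha = 1$ by scaling, exploiting that $-\alpha = \alpha$ in characteristic $2$: $A$ is an $(\alpha,-\alpha)$-composite iff $A/\alpha$ is a sum of two idempotents, and the Jordan condition becomes ``all Jordan blocks of $A/\alpha$ at $\overline{\K}\setminus\{0,1\}$ are even-sized''. I will prove necessity and sufficiency separately.

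The cornerstone for necessity is the identity $[A^2, p] = 0$ in characteristic $2$ when $A = p + q$ with $p^2 = p$ and $q^2 = q$, which follows from $A^2 = A + pq + qp$ by a direct commutator computation whose terms cancel modulo $2$. Since $(A - \mu I)^2 = A^2 - \mu^2 I$ in characteristic $2$, the generalized eigenspace $V_\mu \subset V\otimes\overline{\K}$ of $A$ at $\mu$ coincides with the generalized eigenspace of $A^2$ at $\mu^2$, and is therefore $p,q,A$-invariant. Restricting to $V_\mu$ for $\mu \in \overline{\K}\setminus\{0,1\}$, where $A$ and $A - I$ are invertible, the plan is to show every Jordan block of $A_{|V_\mu}$ is even-sized by induction on $\dim V_\mu$.

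In the base case one assumes $(A - \mu I)^2 = 0$ on $V_\mu$ (so blocks have size at most $2$): decompose $V_\mu = \im p_{|V_\mu} \oplus \Ker p_{|V_\mu}$ and write $M = A - \mu I$ in this block form; the idempotency of $q = A + p$ reduces to $Mp + pM = \mu(\mu+1) I + M$, forcing both diagonal blocks of $M$ to equal $\mu(\mu+1) I$, and $M^2 = 0$ then yields $M_{12}M_{21} = M_{21}M_{12} = \mu^2(\mu+1)^2 I$, so both off-diagonal blocks are invertible (here $\mu(\mu+1) \neq 0$ is essential). A direct count shows $\dim \Ker M = \tfrac{1}{2}\dim V_\mu$, precluding any block of size $1$. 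For the inductive step when $(A - \mu I)^2 \neq 0$ on $V_\mu$, both $V^{(2)} = \Ker(A - \mu I)^2 \cap V_\mu$ and the quotient $V_\mu/V^{(2)}$ are $p,q,A$-invariant and have strictly smaller dimension: the base-case analysis applied to $V^{(2)}$ eliminates blocks of size $1$ in $V_\mu$, and the induction hypothesis applied to $V_\mu/V^{(2)}$ shows every original block of size $\geq 2$ is even.

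For sufficiency, using Remark \ref{elremark}.(ii) and the kernel decomposition, I would split $A = A_{\{0\}} \oplus A_{\{1\}} \oplus A_{-\{0,1\}}$ over $\K$. The identity ``$A = p + q$ iff $A + I = (I + p) + q$'' (with $I + p$ idempotent in characteristic $2$) pairs the first two summands, reducing to exhibiting each nilpotent Jordan block as a sum of two idempotents (doable by small explicit examples with a uniform pattern). For $A_{-\{0,1\}}$, analyzing elementary divisors $P^e$ of irreducibles $P \neq X, X-1$ over $\K$ shows the Jordan condition forces $e$ to be even when $P$ is separable, and is automatic when $P$ is inseparable (the inseparability degree being a power of $2$); on a size-$2k$ block in the basis $A = \bigl(\begin{smallmatrix}\mu I & I \\ N & \mu I\end{smallmatrix}\bigr)$, the explicit $p = \bigl(\begin{smallmatrix} 0 & 0 \\ \mu(\mu+1)I + N & I\end{smallmatrix}\bigr)$ and $q = A + p$ give a decomposition, which adapts to inseparable elementary divisors by replacing $\mu I$ with the companion matrix of $P$. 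The main obstacle is the necessity direction: the easy trace observation gives only ``$\dim V_\mu$ even'' (from $\rg(p_{|V_\mu}) + \rg(q_{|V_\mu}) \in \F_2$ matching $\mu \dim V_\mu$ in $\overline{\K}$), much weaker than ``every block is even''; the key refinement is the relation $M_{12} M_{21} = \mu^2(\mu+1)^2 I$ from the base case, whose invertibility forces the exact dimensional balance ruling out size-$1$ blocks.
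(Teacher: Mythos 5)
Your necessity argument is correct and takes a genuinely different route from the paper's. Your key identity $[A^2,p]=0$ is the paper's Lemma \ref{fundcor} in disguise (in characteristic $2$ one has $(P-Q)^2=(P+Q)^2=A^2-$ a scalar), but from there you diverge: you restrict to a generalized eigenspace $V_\mu$ with $\mu\notin\{0,\alpha\}$ and induct on $\dim V_\mu$, and the base case is sound --- in the decomposition $V_\mu=\im p\oplus\Ker p$ the relations $M_{11}=M_{22}=\mu(\mu+1)I$ and $M_{12}M_{21}=\mu^2(\mu+1)^2I$, $M_{21}M_{12}=\mu^2(\mu+1)^2I$ do force $\dim\im p=\dim\Ker p$ and $\rg M=\tfrac{1}{2}\dim V_\mu$, which kills size-one blocks; the quotient step then correctly propagates evenness to larger blocks. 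The paper never touches Jordan blocks directly on this side: it shows (Section 5, Proposition \ref{not21}) that a composite with no eigenvalue in $\{0,\alpha,\beta,\alpha+\beta\}$ is similar to $\bigl(\begin{smallmatrix}\alpha I& C\\ I&\beta I\end{smallmatrix}\bigr)$, identifies the minimal polynomial of such matrices via companion blocks, and only translates ``elementary factors are polynomials of $(X-\alpha)^2$'' into the even-block condition at the very end (the proposition of Section 3). Your route is more elementary on the necessity side and avoids the rational canonical form there entirely.

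The gap is in sufficiency, exactly where $\K$ fails to be algebraically closed. Your explicit idempotents $p=\bigl(\begin{smallmatrix}0&0\\ \mu(\mu+1)I+N& I\end{smallmatrix}\bigr)$, $q=A+p$ correctly decompose a single even-sized Jordan block over $\overline{\K}$ (the verification goes through), but Theorem \ref{HPsumcar2} demands idempotents over $\K$, and invariance of the property under extension of scalars is stated in the paper as a \emph{consequence} of the theorems, not something you may assume. You therefore must work with the primary rational form over $\K$, and the single clause ``which adapts to inseparable elementary divisors by replacing $\mu I$ with the companion matrix of $P$'' is precisely the statement that needs proof: that $C(P^e)$, for $P$ irreducible different from $X$ and $X-\alpha$ and with $P^e$ an even polynomial, is similar to $\bigl(\begin{smallmatrix}\alpha I& C(Q)\\ I&\alpha I\end{smallmatrix}\bigr)$ for some $Q$ with $Q\bigl((X-\alpha)^2\bigr)=P^e$. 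That is the content of the paper's Lemma 14 (the computation showing $\bigl(\begin{smallmatrix}\alpha I&C(Q)\\ I&\beta I\end{smallmatrix}\bigr)\sim C\bigl(Q((X-\alpha)(X-\beta))\bigr)$ by identifying the minimal polynomial), and it is a genuine computation rather than a routine adaptation. The nilpotent and $\alpha I+N$ summands are handled correctly and as in the paper; supply the companion-matrix step and your proof is complete.
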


\begin{theo}\label{HPsumcar2endo}
Assume $\car(\K)=2$ and let $u \in \End(E)$.
Then $u$ is an $(\alpha,-\alpha)$-composite iff
the elementary factors of $u_{-\{0,\alpha\}}$ are even polynomials.
\end{theo}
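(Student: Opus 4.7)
The plan is to deduce this endomorphism statement from its matrix counterpart, Theorem~\ref{HPsumcar2}, by re-expressing the Jordan-block condition in the language of elementary factors over $\K$. Throughout, set $\Lambda := \{0,\alpha\}$.

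First, fixing any basis of $E$, I would identify $u$ with a matrix $A$ and invoke Theorem~\ref{HPsumcar2}. Since the eigenvalues of $u_\Lambda$ are exactly those excluded by the matrix theorem, the condition reduces to a statement about $u_{-\Lambda}$ alone: namely, that every Jordan block of $u_{-\Lambda}$ over $\overline{\K}$ has even size. Both this condition and the condition on elementary factors stated in Theorem~\ref{HPsumcar2endo} split componentwise under the primary decomposition of $u_{-\Lambda}$, so it suffices to treat the case where $u_{-\Lambda}$ is cyclic with minimal polynomial $\pi^k$ for some irreducible $\pi\in\K[X]$ having no root in $\Lambda$.

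The crucial step is then a dichotomy peculiar to characteristic $2$: such a $\pi$ is separable iff $\pi\notin \K[X^2]$. In the inseparable case, I would write $\pi(X) = h(X^{2^m})$ with $m\geq 1$ and $h$ separable, which forces each root of $\pi$ in $\overline{\K}$ to have multiplicity $2^m$. A Chinese Remainder argument applied to $\overline{\K}[X]/(\pi^k)$ then shows that the Jordan blocks of the cyclic endomorphism all have common size $2^m k$, which is even iff $m\geq 1$ or $k$ is even. On the other hand, the Frobenius identity $f(X)^2 = \sum a_i^2 X^{2i}$ in characteristic $2$ shows that $\pi^k\in\K[X^2]$ holds automatically in the inseparable case, and that in the separable case $\pi^k\in\K[X^2]$ iff $k$ is even (the nontrivial direction being handled by reading off root multiplicities over $\overline{\K}$, using that $X^2-\nu = (X-\sqrt{\nu})^2$ in characteristic $2$). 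Both conditions thus reduce to the same disjunction, and the equivalence follows.

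The hard part will be handling the inseparable case, which has no counterpart over perfect fields: over imperfect fields of characteristic $2$, an irreducible $\pi$ may itself lie in $\K[X^2]$, and one must carefully read off root multiplicities from the presentation $\pi(X) = h(X^{2^m})$ to confirm that this case imposes no extra constraint on $k$ on either side of the equivalence.
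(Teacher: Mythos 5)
Your proposal is circular relative to the paper's logical structure. You propose to deduce Theorem~\ref{HPsumcar2endo} from its matrix counterpart Theorem~\ref{HPsumcar2}, but the paper establishes the implication in exactly the opposite direction: the odd-numbered (Jordan-form) theorems, Theorem~\ref{HPsumcar2} included, are \emph{derived from} the even-numbered (elementary-factor) ones via the Proposition of Section 3, and only the even-numbered theorems receive a substantive proof (Sections 4--7). Since no independent proof of Theorem~\ref{HPsumcar2} is available, invoking it here begs the question. What your argument actually establishes is only the \emph{translation} between the two formulations, namely that ``every Jordan block of $u_{-\{0,\alpha\}}$ over $\overline{\K}$ is even-sized'' is equivalent to ``the elementary factors of $u_{-\{0,\alpha\}}$ are even polynomials''. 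That part is essentially correct: your separable/inseparable dichotomy in characteristic $2$ ($\pi$ inseparable iff $\pi\in\K[X^2]$, root multiplicity $2^m$, the Frobenius computation showing $\pi^k\in\K[X^2]$ iff $k$ is even when $\pi$ is separable) is sound, and it amounts to the $t=0$, $\car(\K)=2$ instance of the Section 3 Proposition. (One step you gloss over: that the condition ``all elementary factors lie in $\K[X^2]$'' splits over the primary decomposition requires showing that if $\prod_j \pi_j^{k_j}\in\K[X^2]$ with the $\pi_j$ distinct irreducibles then each $\pi_j^{k_j}\in\K[X^2]$; this follows by computing the derivative modulo $\pi_j^{k_j}$, but it is not automatic.)

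The substance you are missing is the characterization of $(\alpha,-\alpha)$-composites itself, which cannot be borrowed from Theorem~\ref{HPsumcar2}. The paper obtains it from the reduction principle (Proposition~\ref{redprinc}): since $\beta=\alpha$ and $\alpha+\beta=0$ in characteristic $2$, the endomorphism $u$ splits into a nilpotent part $u_{\{0\}}$, a part $u_{\{\alpha\}}=\alpha.\id+(\text{nilpotent})$, and $u_{-\{0,\alpha\}}$; the first two are always $(\alpha,-\alpha)$-composites by Proposition~\ref{nilpotent}, and the last is one iff its elementary factors are polynomials of $(X-\alpha)^2=X^2-\alpha^2$, i.e. even polynomials, by Proposition~\ref{not21}. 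None of this content appears in your plan, so as written it does not prove the theorem.
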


\vskip 3mm
\noindent The remaining cases are handled by our two last theorems:

\begin{theo}\label{dSP2LCmat}
Let $A \in \Mat_n(\K)$ and $(\alpha,\beta)\in (\K^*)^2$ such that
$\alpha \neq \pm \beta$. Then $A$ is an $(\alpha,\beta)$-composite iff all the following conditions hold:
\begin{enumerate}[(i)]
\item The sequences $(n_k(A,0))_{k \geq 1}$ and $(n_k(A,\alpha+\beta))_{k \geq 1}$
are intertwined.
\item The sequences $(n_k(A,\alpha))_{k \geq 1}$ and $(n_k(A,\beta))_{k \geq 1}$ are intertwined.
\item $\forall \lambda \in \overline{\K} \setminus \{0,\alpha,\beta,\alpha+\beta\}, \; \forall
k \in \N^*, \; j_k(A,\lambda)=j_k(A,\alpha+\beta-\lambda)$.
\item If in addition $\car(\K)\neq 2$, then
$\forall k \in \N^*, \; j_{2k+1}\bigl(A,\frac{\alpha+\beta}{2}\bigr)=0$.
\end{enumerate}
\end{theo}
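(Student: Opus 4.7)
The plan is to decompose $A$, via Remark~\ref{elremark}(ii), into simpler ``atoms'' that can be handled individually. For \textbf{sufficiency}, assuming conditions (i)--(iv), I would group the Jordan blocks of $A$ (equivalently, its elementary factors) into three families and build $P,Q$ on each:
(A) paired blocks $J_k(\lambda)\oplus J_k(\alpha+\beta-\lambda)$ for each eigenvalue $\lambda\in\overline{\K}\setminus\{0,\alpha,\beta,\alpha+\beta,(\alpha+\beta)/2\}$, which condition~(iii) forces to occur in pairs;
(B) even-sized blocks $J_{2k}((\alpha+\beta)/2)$, when $\car(\K)\neq 2$, provided by condition~(iv); and
(C) ``central'' atoms at the eigenvalues in $\{0,\alpha,\beta,\alpha+\beta\}$, either a single block or a paired block $J_k(\mu)\oplus J_\ell(\mu')$ with $|k-\ell|\leq 1$ and $\{\mu,\mu'\}\in\{\{0,\alpha+\beta\},\{\alpha,\beta\}\}$, whose shapes are controlled by the intertwining conditions (i) and (ii).
For type~(A), taking $Q=\mathrm{diag}(I_k,0)$ in a suitable basis and setting $P=\frac{1}{\alpha}(A-\beta Q)$ reduces the idempotency of $P$ to a trace--determinant identity solvable precisely when $\lambda$ avoids the five special values. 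For type~(B), the shift $A-\frac{\alpha+\beta}{2}I=\frac{\alpha}{2}(2P-I)+\frac{\beta}{2}(2Q-I)$ converts the problem into expressing an even nilpotent Jordan block as a linear combination of two involutions, which is standard in $\car(\K)\neq 2$. For type~(C), I would exhibit small explicit idempotents block by block, in the spirit of the building blocks of~\cite{HP}.

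For \textbf{necessity}, suppose $A=\alpha P+\beta Q$. The identities $(A-\alpha I)P=\beta QP$ and $(A-\beta I)Q=\alpha PQ$ show that $A-\alpha I$ carries $\im P$ into $\im Q$ and $A-\beta I$ carries $\im Q$ into $\im P$. Consequently, on the generalized eigenspace of $A$ at any $\lambda\notin\{0,\alpha,\beta,\alpha+\beta\}$ the four operators $A,\,A-\alpha I,\,A-\beta I,\,A-(\alpha+\beta)I$ are all invertible; a suitable composition of these yields an explicit $\K$-linear isomorphism between the generalized $\lambda$-eigenspace and the generalized $(\alpha+\beta-\lambda)$-eigenspace that intertwines the restriction of $A$, from which condition~(iii) follows. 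When $\car(\K)\neq 2$, the identity $A-\frac{\alpha+\beta}{2}I=\frac{\alpha}{2}(2P-I)+\frac{\beta}{2}(2Q-I)$ exhibits the shifted matrix as a linear combination of two involutions whose coefficient ratio is not $\pm 1$ (this is where $\alpha\neq\pm\beta$ is essential), and a parity argument forces the Jordan blocks at $0$ of such a combination to be of even size, yielding~(iv). The intertwining conditions (i) and (ii) should follow from analysing the block form of $A$ in the Pierce decomposition $E=\im P\oplus\Ker P$ and applying a careful rank-nullity count on the iterated powers $(A-\mu I)^k$ at the relevant central eigenvalues~$\mu$.

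The \textbf{main obstacle} will be establishing the intertwining conditions in both directions: extracting (i) and (ii) from the Pierce block structure in necessity, and constructing the type~(C) atoms explicitly in sufficiency. These conditions encode subtle combinatorial constraints on the Weyr characteristics at the four central eigenvalues, and the induction must keep careful track of how $\im P$ and $\im Q$ decompose with respect to the generalized eigenspaces of~$A$. A secondary subtlety is that the type~(A) construction degenerates exactly at $\lambda=(\alpha+\beta)/2$---precisely where condition~(iv) takes over---and the hypothesis $\alpha\neq\pm\beta$ is used throughout: to ensure that the four eigenvalues $0,\alpha,\beta,\alpha+\beta$ are distinct, and to keep the denominators in the type~(A) construction from vanishing.
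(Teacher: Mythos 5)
Your overall architecture (split $A$ into invariant pieces, treat the eigenvalues in $\{0,\alpha,\beta,\alpha+\beta\}$ separately from the rest, and note that $\alpha\neq\pm\beta$ keeps those four values distinct) matches the paper's, and you have correctly located the hard points. But as written the proposal has two genuine gaps. First, a rationality problem in the sufficiency direction: your atoms of types (A) and (B) are Jordan blocks $J_k(\lambda)\oplus J_k(\alpha+\beta-\lambda)$ with $\lambda\in\overline{\K}$, so the idempotents you build on them live in $\Mat(\overline{\K})$, not $\Mat(\K)$, and you cannot invoke invariance of the composite property under field extension, since that invariance is itself a corollary of the theorem. The paper never leaves $\K$: it replaces your types (A) and (B) by companion matrices $C\bigl(Q((X-\alpha)(X-\beta))\bigr)$ with $Q\in\K[X]$, shows $C\bigl(Q(Y)\bigr)\sim\begin{bmatrix}\alpha I_m & C(Q)\\ I_m & \beta I_m\end{bmatrix}=\alpha\begin{bmatrix}I_m&0\\ \frac{1}{\alpha} I_m&0\end{bmatrix}+\beta\begin{bmatrix}0&\frac{1}{\beta} C(Q)\\0&I_m\end{bmatrix}$, and proves separately that ``all elementary factors are polynomials of $(X-\alpha)(X-\beta)$'' is equivalent to your Jordan-theoretic conditions (iii) and (iv). Relatedly, your type (A) recipe (``$Q=\mathrm{diag}(I_k,0)$, idempotency reduces to a trace--determinant identity'') only makes sense for $k=1$, and your type (B) claim is circular: writing $J_{2k}$ as $\frac{\alpha}{2}(2P-I)+\frac{\beta}{2}(2Q-I)$ with involutions is exactly the statement to be proved for that block.

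Second, and more seriously, the necessity of the intertwining conditions (i) and (ii) is the technical heart of the paper, and your sketch supplies nothing for it beyond naming it an obstacle. The paper's route is: $P$ and $Q$ commute with $(A-\alpha I)(A-\beta I)=\alpha\beta\bigl(I-(P-Q)^2\bigr)$, hence preserve the filtration by the iterated kernels of this nilpotent operator on the $\{\alpha,\beta\}$-part; in the resulting block-triangular form one reduces to a two-step matrix $\begin{bmatrix}K_{p,q}&J_{p,q,r,s}\\0&K_{r,s}\end{bmatrix}$ and proves $q\geq r$ and $p\geq s$ by extracting matrices $A_1,A_2$ from the block form of $P$ and sandwiching $r\leq\rg(A_1)+\rg(A_2)\leq q$ (the upper bound from $A_2A_1=0$, the lower from one block of the identity $AB+BC=B$). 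A ``rank--nullity count in the Pierce decomposition'' will not produce this without the commutation lemma and the filtration, so this step must be worked out in full. Likewise, the necessity of (iii) and (iv) needs more than ``a suitable composition'' and ``a parity argument'': the paper derives both from the canonical form $\begin{bmatrix}\alpha I&C\\ I&\beta I\end{bmatrix}$ of the part of $A$ with no eigenvalue in $\{0,\alpha,\beta,\alpha+\beta\}$, whose minimal polynomial is shown to be a polynomial of $(X-\alpha)(X-\beta)$.
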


\begin{theo}\label{dSP2LCendo}
Let $u\in \End(E)$ and $(\alpha,\beta)\in (\K^*)^2$ such that
$\alpha \neq \pm \beta$. Then $u$ is an $(\alpha,\beta)$-composite iff all the following conditions hold:
\begin{enumerate}[(i)]
\item The sequences $(n_k(u,0))_{k \geq 1}$ and $(n_k(u,\alpha+\beta))_{k \geq 1}$ are intertwined.
\item The sequences $(n_k(u,\alpha))_{k \geq 1}$ and $(n_k(u,\beta))_{k \geq 1}$ are intertwined.
\item The elementary factors of $u_{-\{0,\alpha,\beta,\alpha+\beta\}}$ are polynomials of $(X-\alpha)\,(X-\beta)$.
\end{enumerate}
\end{theo}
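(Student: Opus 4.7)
My plan is to derive Theorem \ref{dSP2LCendo} from its matrix counterpart Theorem \ref{dSP2LCmat} through a purely algebraic translation. I will first fix any basis of $E$ to identify $u$ with a matrix $A\in\Mat_n(\K)$; conditions (i) and (ii) of the two theorems then coincide literally, since the numbers $n_k(u,\lambda)=n_k(A,\lambda)$ are defined through kernels over $\K$ and the relevant scalars $0,\alpha,\beta,\alpha+\beta$ lie in $\K$. The only thing left to prove is the equivalence of endo condition (iii) with matrix condition (iii) (and, in char~$\neq 2$, also (iv)), both applied to the restriction $v:=u_{-\{0,\alpha,\beta,\alpha+\beta\}}$; note that in char~$\neq 2$ the point $\gamma:=(\alpha+\beta)/2$ lies outside $\{0,\alpha,\beta,\alpha+\beta\}$ by the hypothesis $\alpha\neq\pm\beta$, so the condition at $\gamma$ concerns only the Jordan blocks of $v$.

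The algebraic core is the identity $\K[X]^\tau=\K[\phi]$, where $\phi(X):=(X-\alpha)(X-\beta)$ and $\tau$ is the $\K$-algebra involution of $\K[X]$ defined by $\tau(X)=\alpha+\beta-X$. Using the $\K[\phi]$-module decomposition $\K[X] = \K[\phi]\oplus\K[\phi]\cdot X$ together with $\tau(X)\neq X$ (since $\alpha+\beta\neq 0$), this follows at once by comparing components. In particular, every $\tau$-invariant monic polynomial has even degree. Consequently, endo condition (iii) becomes: \emph{every invariant factor $d_i$ of $v$ is $\tau$-invariant.}

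For the direction endo~(iii) $\Rightarrow$ matrix~(iii) (+ (iv) in char~$\neq 2$), I would write $d_i=g_i(\phi)$; in char~$\neq 2$, using $\phi=(X-\gamma)^2-\delta^2$ with $\delta:=(\alpha-\beta)/2$ shows that the roots of $d_i$ come in pairs $\{\lambda,\alpha+\beta-\lambda\}$ of equal multiplicity and that $\gamma$ appears as a root of $d_i$ with multiplicity equal to twice that of $-\delta^2$ in $g_i$, hence even. For the converse, matrix~(iii) forces $v$ and $\alpha+\beta-v$ to share Jordan structure over $\overline{\K}$; since $\K$-similarity is detected by Jordan structure over $\overline{\K}$, they are $\K$-similar, and a standard change of variable on the $\K[X]$-module structure shows that the invariant factors of $\alpha+\beta-v$ are $(-1)^{\deg d_i}d_i(\alpha+\beta-X)$. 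Equating with $d_i$ yields $\tau$-invariance whenever $\deg d_i$ is even.

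I expect the main technical obstacle to be the odd-degree case in the converse. Supposing some invariant factor $d_i$ has odd degree (possible only in char~$\neq 2$) gives $d_i(X)=-d_i(\alpha+\beta-X)$, hence $d_i(\gamma)=0$; writing $d_i=(X-\gamma)^{m_i(\gamma)}e_i(X)$ with $e_i(\gamma)\neq 0$ and invoking the evenness of $m_i(\gamma)$ supplied by matrix~(iv), a direct substitution produces $e_i(\gamma)=-e_i(\gamma)$, contradicting $e_i(\gamma)\neq 0$. In char~$2$, condition~(iv) is absent and unnecessary: $\tau$ has no fixed point (because $\alpha+\beta\neq 0$ follows from $\alpha\neq\beta$) and every $\tau$-invariant polynomial is automatically of even degree by $\K[X]^\tau=\K[\phi]$, so the matrix and endomorphism conditions collapse into each other without any parity manipulation.
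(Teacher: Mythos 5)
Your argument is circular in the context of this paper, so it does not constitute a proof. You derive Theorem \ref{dSP2LCendo} from Theorem \ref{dSP2LCmat}, but the paper's logical order is the reverse: the odd-numbered (matrix/Jordan) theorems, including Theorem \ref{dSP2LCmat}, are themselves \emph{deduced} from the even-numbered (elementary-factor) ones via the Proposition of Section 3, which is exactly the equivalence between ``elementary factors are polynomials of $X(X-t)$'' and the Jordan-pairing conditions (iii)--(iv). Your $\tau$-invariance computation (with $\tau(X)=\alpha+\beta-X$, $\K[X]^\tau=\K[\phi]$, the pairing of roots of $\phi(X)-\mu$, and the parity argument at $\gamma=(\alpha+\beta)/2$) is a correct and rather clean re-proof of that translation step, arguably tidier than the paper's induction on invariant factors. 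But it is \emph{only} the translation step: nothing in your proposal establishes either statement. All of the substantive content is missing --- the reduction principle of Section 4 (that $u$ is an $(\alpha,\beta)$-composite iff $u_{\{0,\alpha+\beta\}}$, $u_{\{\alpha,\beta\}}$ and $u_{-\{0,\alpha,\beta,\alpha+\beta\}}$ all are, which rests on $P$ and $Q$ commuting with $(M-\alpha I)(M-\beta I)$), the explicit construction $\begin{bmatrix}\alpha I_m & C\\ I_m & \beta I_m\end{bmatrix}$ and Proposition \ref{not21} handling the spectrum-free part (giving condition (iii)), and Proposition \ref{alphabeta} with its intertwinement lemma handling the $\{\alpha,\beta\}$ and $\{0,\alpha+\beta\}$ parts (giving conditions (i) and (ii)). None of this can be recovered from a formal equivalence between the two phrasings of the result.

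If you want to salvage the proposal, you must either prove Theorem \ref{dSP2LCmat} independently (which would require essentially the same work as the paper's direct proof of Theorem \ref{dSP2LCendo}) or abandon the translation strategy and prove Theorem \ref{dSP2LCendo} directly. Your $\tau$-invariant calculation would then still be useful, but as a proof of the Section 3 Proposition (i.e., as the bridge \emph{from} Theorem \ref{dSP2LCendo} \emph{to} Theorem \ref{dSP2LCmat}), not as a proof of the theorem you were asked to prove.
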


\begin{Rem}
A striking consequences of the previous theorems is that being an $(\alpha,\beta)$-composite
is invariant under extension of scalars. More precisely, given a matrix $A \in \Mat_n(\K)$,
an extension $\L$ of $\K$ and non-zero scalars $\alpha$ and $\beta$ in $\K$, the matrix $A$ is an $(\alpha,\beta)$-composite
in $\Mat_n(\K)$ iff it is an $(\alpha,\beta)$-composite in $\Mat_n(\L)$.
\end{Rem}

\vskip 4mm
\noindent The rest of the paper is laid out as follows:
\begin{enumerate}[(i)]
\item In section 3, we show how the odd-labeled theorems can be derived from the even-labeled ones,
e.g. how one can deduce Theorem 1 from Theorem 2.
\item In section 4, we will establish a reduction principle that will show us
that we can limit ourselves to three particular cases for $u \in \End(E)$: the case $u$
has no eigenvalue in $\{0,\alpha,\beta,\alpha+\beta\}$, the case $u$ has all its eigenvalues in
$\{\alpha,\beta\}$ and the case it has all its eigenvalues in $\{0,\alpha+\beta\}$.
\item The case $u$ has no eigenvalue in $\{0,\alpha,\beta,\alpha+\beta\}$
is handled in section 5 by using the reduction to a canonical form and considerations of cyclic matrices.
\item In section 6, we reduce the remaining cases to the sole case $\alpha \neq \beta$ and $u$
has all its eigenvalues in $\{\alpha,\beta\}$, and show how theorems 2, 4, 6 and 8 can be proved
if that case is solved.
\item Finally, in section 7, we solve the case $\alpha \neq \beta$ and $u$ has all its eigenvalues in $\{\alpha,\beta\}$.
\end{enumerate}

\section{Additional notations}

 Similarity of two matrices $A$ and $B$ of $\Mat_n(\K)$ will be written $A\sim B$.
The rank of a matrix $M$ will be written $\rg(M)$, and its spectrum $\Sp(M)$.
Given a list $(A_1,\dots,A_p)$ of square matrices, we will denote by
$$D(A_1,\dots,A_p):=\begin{bmatrix}
A_1 & 0 & & 0 \\
0 & A_2 & & \vdots \\
\vdots & & \ddots & \\
0 & \dots & &  A_p
\end{bmatrix}
$$
the block-diagonal matrix with diagonal blocs $A_1$, \dots, $A_p$.

\begin{Not}
Given a monic polynomial $P=X^n-a_{n-1}X^{n-1}-\dots-a_1X-a_0$, we let
$$C(P):=\begin{bmatrix}
0 & 0 & \dots  & & 0 & a_0 \\
1 & 0 & & & 0 & a_1 \\
0 & 1 & 0 & \dots & 0 & a_2 \\
 &  & \ddots & \ddots & & \vdots\\
\vdots &  & &  1 & 0 & a_{n-2} \\
0 & \dots & \dots & 0 & 1 & a_{n-1}
\end{bmatrix}$$
denote its companion matrix.

\item Given $n \in \N^*$ and $\lambda \in \K$, we set $J_n:=(\delta_{i+1,j})_{1 \leq i,j \leq n}$
i.e.
$$J_n=\begin{bmatrix}
0 & 1 & 0 & \dots & 0 \\
0 & 0 & 1 & \dots & 0 \\
\vdots & & & \ddots & \\
0 & \dots &   & 0 & 1 \\
0 & \dots & &  & 0
\end{bmatrix}$$
and
$$J_\lambda(n):=\lambda.I_n+J_n \quad \text{(the Jordan block of size $n$ associated to $\lambda$).}$$
\end{Not}

\section{Elementary factors vs Jordan reduction}

Derivation of Theorem 1 from Theorem 2 (resp. of Theorem 3 from Theorem 4, resp. of Theorem 5 from Theorem 6, resp.
of Theorem 7 from Theorem 8) can be easily obtained by using the following result and the simple remark
that polynomials of $(X-\alpha)\,(X-\beta)=X^2-(\alpha+\beta)\,X+\alpha\beta$ are polynomials of $X(X-\alpha-\beta)$.

\begin{prop}
Let $A \in \Mat_n(\K)$ and $t \in \K$.
The following conditions are then equivalent:
\begin{enumerate}[(i)]
\item The elementary factors of $M$ are polynomials of $X(X-t)$.
\item For every $\lambda \in \overline{\K}$,
\begin{itemize}
\item if $\lambda \neq t-\lambda$, then $\forall k \in \N^*, \; j_k(A,\lambda)=j_k(A,t-\lambda)$;
\item if $\lambda=t-\lambda$, then $\forall k \in \N, \; j_{2k+1}(A,\lambda)=0$.
\end{itemize}
\end{enumerate}
\end{prop}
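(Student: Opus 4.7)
The plan is to characterize the subalgebra $\K[X(X-t)] \subset \K[X]$ in terms of root multiplicities over $\overline{\K}$, and then apply this characterization to each elementary factor of $A$ to translate the condition into Jordan block data. The central claim I would prove first is that a monic polynomial $f \in \K[X]$ lies in $\K[X(X-t)]$ if and only if, over $\overline{\K}$, the multiplicity $m_f(\lambda)$ at each root satisfies $m_f(\lambda) = m_f(t-\lambda)$ when $\lambda \neq t - \lambda$, and $m_f(\lambda)$ is even whenever $\lambda = t - \lambda$.

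To prove this characterization, I would factor the auxiliary quadratic $X(X-t) - \mu = X^2 - tX - \mu$ over $\overline{\K}$ for arbitrary $\mu$: its roots form a pair $\{\lambda, t-\lambda\}$, either distinct or coinciding as a double root at a fixed point of the involution $X \mapsto t - X$ (the latter precisely when the discriminant $t^2 + 4\mu$ vanishes, i.e.\ when $\mu = -t^2/4$ in characteristic $\neq 2$, or when $t = 0$ in characteristic $2$). Writing $f = g(X(X-t))$ and factoring $g$ over $\overline{\K}$ yields one direction. For the converse, grouping the roots of $f$ into such pairs (with common multiplicity) and into fixed points (with even multiplicity) expresses $f$ as a product of quadratics of the form $X(X-t) - \mu$ and of square factors at fixed points, the latter lying in $\K[X(X-t)]$ through $(X - t/2)^2 = X(X-t) + t^2/4$ when $\car(\K) \neq 2$, or through $(X - \lambda)^{2k} = (X^2 + \lambda^2)^k = (X(X-t) + \lambda^2)^k$ with $t = 0$ when $\car(\K) = 2$. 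The descent from $\overline{\K}$-coefficients to $\K$-coefficients is automatic because $\K[X]$ is free of rank two over $\K[X(X-t)]$ with basis $\{1, X\}$, so the unique expression $f = g_0(X(X-t)) + X g_1(X(X-t))$ has the same field of coefficients as $f$ itself.

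Applying this to the elementary factors $f_1 \mid \cdots \mid f_r$ of $A$ finishes the argument. Writing $m_i(\lambda)$ for the $(X - \lambda)$-adic valuation of $f_i$, the divisibility relations make $(m_i(\lambda))_i$ weakly increasing in $i$ for each fixed $\lambda$, and one has $j_k(A, \lambda) = \#\{i : m_i(\lambda) = k\}$. The implication (i) $\Rightarrow$ (ii) follows by summing the pointwise symmetry (respectively, evenness) over $i$. Conversely, condition (ii) equates the multisets $\{m_i(\lambda)\}_i$ and $\{m_i(t-\lambda)\}_i$ (the number of ambient zeros $r - \sum_{k} j_k(A,\lambda)$ matches on the two sides thanks to $j_k(A,\lambda) = j_k(A,t-\lambda)$); since both sequences are sorted, termwise equality follows, and $j_{2k+1}(A,\lambda) = 0$ at a fixed point forces every $m_i(\lambda)$ to be even there. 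Each $f_i$ then satisfies the characterization and hence lies in $\K[X(X-t)]$.

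The main obstacle is the characteristic $2$ case with $t = 0$, where the naive symmetry criterion $f(X) = f(t-X)$ becomes vacuous (since $t - X = X$) despite $\K[X^2] \subsetneq \K[X]$ being a proper subalgebra. Working directly with the explicit factorization of $X(X-t) - \mu$, as above, rather than through the involution, sidesteps this pitfall and yields a uniform argument across all characteristics.
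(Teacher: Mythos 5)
Your proof is correct, but it follows a genuinely different route from the paper's. The paper proves (i)$\Rightarrow$(ii) by reducing to the companion matrix of a single primary factor $Q\bigl(X(X-t)\bigr)$ with $Q=(Y-\lambda)^r$ and computing its Jordan form case by case, and proves (ii)$\Rightarrow$(i) by first showing only that the \emph{minimal polynomial} $\mu_A$ lies in $\K[X(X-t)]$ and then running a downward induction on the size of the matrix via the splitting $A \sim D\bigl(B,C(\mu_A)\bigr)$. You instead isolate a self-contained algebraic lemma --- a monic $f\in\K[X]$ lies in $\K[X(X-t)]$ iff its root multiplicities over $\overline{\K}$ are symmetric under $\lambda\mapsto t-\lambda$ and even at fixed points --- with an explicit descent from $\overline{\K}$ to $\K$ using the freeness of $\K[X]$ over $\K[X(X-t)]$ with basis $\{1,X\}$, and then dispatch \emph{all} the invariant factors at once: the divisibility chain makes each multiplicity sequence $\bigl(m_i(\lambda)\bigr)_i$ weakly increasing, so the multiset equality furnished by (ii) upgrades to termwise equality, and each factor individually satisfies the lemma. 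This buys you two things the paper does not make explicit: it removes the induction entirely, and it records the rationality-of-coefficients step (the paper's computation exhibits $\mu_A$ as a product of quadratics $X^2-tX+c$ with $c\in\overline{\K}$ and leaves the descent to $\K$ implicit). Your discriminant analysis of $X^2-tX-\mu$ also handles characteristic $2$ with $t=0$ cleanly, where the symmetry statement alone degenerates. The only cost is that you must invoke the precise dictionary $j_k(A,\lambda)=\#\{i:\;m_i(\lambda)=k\}$ between invariant factors and Jordan blocks, which the paper's reduction to companion matrices establishes along the way; this is standard and your use of it is sound.
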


\begin{proof}
\begin{itemize}
\item Assume (i). By reduction to an elementary rational canonical form, it suffices to prove condition (ii) when
$A$ is the companion matrix of some polynomial $P=Q(X(X-t))$, with $Q=(Y-\lambda)^r \in \overline{\K}[Y]$ for some
$\lambda \in \K^*$ (remark that
when $Q_1$ and $Q_2$ are mutually prime polynomials, the polynomials $Q_1(X(X-t))$ and $Q_2(X(X-t))$
are mutually prime by the Bezout identity).
\begin{itemize}
\item Assume $X^2-t\,X-\lambda$ has only one root $u$ in
$\overline{\K}$, so it can be written $(X-u)^2$, hence $A=C((X-u)^{2r})$ has only one Jordan block:
this block is even-sized, corresponds to the eigenvalue $u$, and one has $u=t-u$: this proves that $A$ satisfies condition (ii).
\item Assume $X^2-t\,X-\lambda$ has two roots in $\overline{\K}$, let $v$ denote one such root, the other
one being $t-v$. One has then $v \neq t-v$ and
$$A=C\bigl((X-v)^N(X-(t-v))^N\bigr) \sim \begin{bmatrix}
C((X-v)^N) & 0 \\
0 & C((X-t+v)^N)
\end{bmatrix}.$$
In this case, $A$ has only two Jordan blocks, they have the same size and are associated respectively to $v$ and $t-v$, so
$A$ satisfies condition (ii).
\end{itemize}
\vskip 2mm
\item Assume now condition (ii) holds. Let $\mu_A$ denote the minimal polynomial of $A$.
We will first prove that $\mu_A$ is a polynomial of $X(X-t)$.
Since $\delta \mapsto t-\delta$ is an involution, we can split $\Sp(A)$ as a disjoint union
$$\Sp(A)=\calB \cup \calC \cup \calC'$$
where $\calB=\{\delta \in \Sp(A) : \; \delta=t-\delta\}$ and
$\delta \mapsto t-\delta$ is a bijection from $\calC$ to $\calC'$.
For $\delta \in \Sp(A)$, set $r_\delta=\max \{k \in \N^* : \; j_k(A,\delta) \neq 0\}$.
Then the Jordan reduction theorem shows that
$$\mu_A=\underset{\delta \in \Sp(A)}{\prod}(X-\delta)^{r_\delta}.$$
Condition (ii) then entails that $r_\delta=r_{t-\delta}$ for every $\delta \in \calC$
and $r_\delta$ is even when $\delta \in \calB$, hence we may write:
\begin{align*}
\mu_A & =\underset{\delta \in \calB}{\prod}(X-\delta)^{2\,(r_\delta/2)}\,
\underset{\delta \in \calC}{\prod}(X-\delta)^{r_\delta}(X-t+\delta)^{r_\delta} \\
& =\underset{\delta \in \calB}{\prod}(X^2-tX+\delta^2)^{r_\delta/2}
\underset{\delta \in \calC}{\prod}\bigl(X^2-t\,X+\delta(t-\delta)\bigr)^{r_\delta},
\end{align*}
hence $\mu_A$ is a polynomial of $X(X-t)$. \\
However, the theory of elementary factors shows there is a square matrix $B$ such that:
$$A \sim \begin{bmatrix}
B & 0 \\
0 & C(\mu_A)
\end{bmatrix},$$
and it now suffices to show that the elementary factors of $B$ are polynomials of $X(X-t)$.
However $j_k(B,\delta)=j_k(A,\delta)-j_k(C(\mu_A),\delta)$ for every $k \in \N^*$ and $\delta \in \overline{\K}$,
and $A$ and $C(\mu_A)$ satisfy (ii) (for that last matrix, we can use the first part of the proof or simply compute
its Jordan form), so clearly $B$ satisfies (ii). We can thus conclude by downward induction on the size of the matrices.
\end{itemize}
\end{proof}

\section{Reducing the problem}

The first key lemma is a classical one:

\begin{lemme}
Let $P$ and $Q$ be two idempotents in a $\K$-algebra $\calA$.
Then $P$ and $Q$ commute with $(P-Q)^2$.
\end{lemme}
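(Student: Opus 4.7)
The plan is simply to expand $(P-Q)^2$ using the idempotent relations $P^2=P$ and $Q^2=Q$, which gives
$$(P-Q)^2=P^2-PQ-QP+Q^2=P+Q-PQ-QP,$$
and then to verify the two commutations by direct left and right multiplication.

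Multiplying on the left and on the right by $P$ and using $P^2=P$ throughout, I expect to find
$$P(P-Q)^2=P+PQ-PQ-PQP=P-PQP$$
and
$$(P-Q)^2P=P+QP-PQP-QP=P-PQP,$$
which establishes that $P$ commutes with $(P-Q)^2$. For $Q$, the cleanest route is to observe that $(P-Q)^2=(Q-P)^2$ so the statement is symmetric in $P$ and $Q$, and the analogous computation yields $Q(P-Q)^2=(P-Q)^2 Q=Q-QPQ$.

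There is essentially no obstacle here: the argument is a two-line manipulation, and the only point to be careful about is that we never assume $P$ and $Q$ commute with each other or that the characteristic is not $2$, since the statement is used later in characteristic $2$ as well. Everything is a purely formal identity in the associative algebra $\calA$.
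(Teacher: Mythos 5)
Your computation is correct and is exactly the paper's own argument: expand $(P-Q)^2=P+Q-PQ-QP$, check that both $P(P-Q)^2$ and $(P-Q)^2P$ equal $P-PQP$, and conclude for $Q$ by the symmetry $(P-Q)^2=(Q-P)^2$. Nothing to add.
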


\begin{proof}
Indeed $(P-Q)^2=P+Q-PQ-QP$, so $P\,(P-Q)^2=P-PQP=(P-Q)^2\,P$.
By the same argument, $Q$ commutes with $(Q-P)^2=(P-Q)^2$.
\end{proof}

\begin{cor}\label{fundcor}
Let $P$ and $Q$ be two idempotents in a $\K$-algebra $\calA$, and
set $M:=\alpha.P+\beta.Q$.
Then $P$ and $Q$ commute with $(M-\alpha.I_n)\,(M-\beta.I_n)$.
\end{cor}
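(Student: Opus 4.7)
The plan is to express $(M-\alpha I)(M-\beta I)$ as a scalar multiple of $I - (P-Q)^2$, so that the preceding lemma applies directly. Since we have no structural information on $P$ and $Q$ beyond idempotency, a direct algebraic expansion is the natural route, and there is no serious obstacle to anticipate: the two sides of the identity we need are both low-degree polynomials in $P$ and $Q$.

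Concretely, I would first compute
\[
M^2 = \alpha^2 P^2 + \beta^2 Q^2 + \alpha\beta(PQ+QP) = \alpha^2 P + \beta^2 Q + \alpha\beta(PQ+QP),
\]
using $P^2=P$ and $Q^2=Q$. Next I would expand
\[
(\alpha+\beta)M = (\alpha^2+\alpha\beta)P + (\alpha\beta+\beta^2)Q,
\]
and subtract to obtain
\[
M^2 - (\alpha+\beta)M = \alpha\beta\bigl(PQ+QP - P - Q\bigr) = -\alpha\beta\,(P-Q)^2,
\]
where the last equality is exactly the identity $(P-Q)^2 = P+Q-PQ-QP$ already used in the proof of Lemma~3.

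Therefore
\[
(M-\alpha I)(M-\beta I) = M^2 - (\alpha+\beta)M + \alpha\beta\,I = \alpha\beta\bigl(I - (P-Q)^2\bigr).
\]
By Lemma~3, both $P$ and $Q$ commute with $(P-Q)^2$, hence with $I-(P-Q)^2$, hence with $\alpha\beta(I-(P-Q)^2) = (M-\alpha I)(M-\beta I)$, which is the desired conclusion.
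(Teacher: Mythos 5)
Your proof is correct and follows exactly the paper's approach: the paper's own proof consists of the same identity $(M-\alpha I_n)(M-\beta I_n)=\alpha\beta\bigl(I_n-(P-Q)^2\bigr)$ (stated there as ``a straightforward computation''), followed by the appeal to the preceding lemma. You have merely written out the computation the paper leaves implicit.
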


\begin{proof}
Indeed, a straightforward computation shows that
$$(M-\alpha.I_n)\,(M-\beta.I_n)=\alpha\,\beta\,\bigl(I_n-(P-Q)^2\bigr).$$
\end{proof}

\noindent Let now $u$ be an endomorphism of $E$ and assume there are idempotents $p$ and $q$ such that
$u=\alpha.p+\beta.q$.

\vskip 4mm
\noindent We decompose the minimal polynomial of $u$ as
$$\mu_u=X^a\,(X-\alpha)^b\,(X-\beta)^c\,(X-\alpha-\beta)^d\,P(X)$$
so that $P$ has no root in $\{0,\alpha,\beta,\alpha+\beta\}$ (in case $\alpha+\beta=0$, we simply take $d=0$).
Since $F:=\Ker P(u)$ is stabilized by $v:=(u-\alpha.\id)\circ (u-\beta.\id)$,
we can define $Q$ as the minimal polynomial
of $v_{|F}$: then $F=\Ker Q(v)$ and $u_{|F}$ has no eigenvalue in $\{0,\alpha,\beta,\alpha+\beta\}$.

\vskip 4mm
\noindent By Corollary \ref{fundcor}, $p$ and $q$ commute with $v$ and therefore stabilize the
three subspaces:
\begin{itemize}
\item $\Ker v^n=\Ker (u-\alpha.\id_E)^n \oplus \Ker (u-\beta.\id_E)^n$;
\item $\Ker (v-\alpha\,\beta.\id_E)^n =\Ker u^n \oplus \Ker (u-(\alpha+\beta).\id_E)^n$;
\item $\Ker Q(v)=\Ker P(u)$.
\end{itemize}
Since $u=\alpha.p+\beta.q$, restricting to those three subspaces shows that
the three endomorphisms $u_{\{\alpha,\beta\}}$, $u_{\{0,\alpha+\beta\}}$ and $u_{-\{0,\alpha,\beta,\alpha+\beta\}}$
are themselves $(\alpha,\beta)$-composites. Using Remark \ref{elremark}.(ii), we deduce the following reduction principle:

\begin{prop}[Reduction principle]\label{redprinc}
Let $u \in \End(E)$. Then $u$ is an $(\alpha,\beta)$-composite iff
both $u_{\{0,\alpha+\beta\}}$, $u_{\{\alpha,\beta\}}$ and $u_{-\{0,\alpha,\beta,\alpha+\beta\}}$
are $(\alpha,\beta)$-composites.
\end{prop}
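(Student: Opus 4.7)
The plan is to prove both directions using what has already been set up in the text. The forward implication is essentially done in the paragraphs immediately preceding the statement; I just have to wrap it up. The converse, which is the new content, will reduce to the kernel decomposition theorem together with Remark \ref{elremark}.(ii).

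For the ``only if'' direction, I adopt the notation from the discussion above: write $\mu_u = X^a(X-\alpha)^b(X-\beta)^c(X-\alpha-\beta)^d P(X)$ with $P$ coprime to $X(X-\alpha)(X-\beta)(X-\alpha-\beta)$ (and $d=0$ if $\alpha+\beta=0$), and set $v := (u-\alpha\cdot \id_E)\circ(u-\beta\cdot \id_E)$. Given any decomposition $u = \alpha\cdot p + \beta\cdot q$ with $p,q$ idempotent, Corollary \ref{fundcor} says that $p$ and $q$ commute with $v$, hence stabilise every polynomial kernel in $v$, and in particular the three subspaces
\[
\Ker v^n,\quad \Ker(v-\alpha\beta\cdot \id_E)^n,\quad \Ker Q(v),
\]
which by kernel decomposition applied to $u$ coincide with the invariant subspaces corresponding to $u_{\{\alpha,\beta\}}$, $u_{\{0,\alpha+\beta\}}$ and $u_{-\{0,\alpha,\beta,\alpha+\beta\}}$ respectively. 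Restricting the identity $u = \alpha\cdot p + \beta\cdot q$ to each of these $p$- and $q$-stable subspaces yields three $(\alpha,\beta)$-composite decompositions, one for each restriction.

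For the converse, I first note that because $\alpha,\beta\in\K^*$, the equalities $\alpha+\beta=\alpha$ and $\alpha+\beta=\beta$ are impossible, so $\{\alpha,\beta\}$ and $\{0,\alpha+\beta\}$ are disjoint subsets of $\K$; together with the fact that $P$ has no root in $\{0,\alpha,\beta,\alpha+\beta\}$, this makes $(X-\alpha)(X-\beta)$, $X(X-\alpha-\beta)$ and $P$ pairwise coprime in $\K[X]$. The kernel decomposition theorem applied to $\mu_u$ then splits
\[
E = F_{\{\alpha,\beta\}} \oplus F_{\{0,\alpha+\beta\}} \oplus F_{-\{0,\alpha,\beta,\alpha+\beta\}},
\]
with $u$ restricting to the three endomorphisms appearing in the statement. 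If each restriction is an $(\alpha,\beta)$-composite, choosing idempotent pairs $(p_i,q_i)$ realising each one and gluing by direct sum produces global idempotents $p=p_1\oplus p_2\oplus p_3$ and $q=q_1\oplus q_2\oplus q_3$ with $u=\alpha\cdot p+\beta\cdot q$; this is precisely the content of Remark \ref{elremark}.(ii) in block-diagonal form.

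There is no genuine obstacle here: both directions are routine consequences of kernel decomposition, combined with Corollary \ref{fundcor} on one side and Remark \ref{elremark}.(ii) on the other. The one point requiring a moment's attention is the handling of the degeneracies $\alpha+\beta=0$ (where $d=0$ by convention and $\{0,\alpha+\beta\}=\{0\}$) and $\alpha=\beta$ (where $\{\alpha,\beta\}=\{\alpha\}$); in every such case the three relevant subsets of $\K$ stay pairwise disjoint, so the coprimality argument, and hence the whole proof, goes through unchanged.
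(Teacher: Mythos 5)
Your proof is correct and follows essentially the same route as the paper: the forward direction via Corollary \ref{fundcor} (idempotents commute with $v=(u-\alpha\,\id)\circ(u-\beta\,\id)$, hence stabilise the three characteristic subspaces) and the converse via the kernel decomposition and Remark \ref{elremark}.(ii). Your added checks on the disjointness of $\{\alpha,\beta\}$ and $\{0,\alpha+\beta\}$ and on the degenerate cases $\alpha=\beta$ and $\alpha+\beta=0$ are sound and merely make explicit what the paper leaves implicit.
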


\noindent We are now reduced to the three special cases that follow:
\begin{itemize}
\item $u$ has no eigenvalue in $\{0,\alpha,\beta,\alpha+\beta\}$;
\item $u$ is triangularizable with all eigenvalues in $\{\alpha,\beta\}$;
\item $u$ is triangularizable with all eigenvalues in $\{0,\alpha+\beta\}$.
\end{itemize}

\section{When no eigenvalue belongs to $\{0,\alpha,\beta,\alpha+\beta\}$}

In this section, $u$ still denotes an endomorphism of $E$.
We assume that $u$ has no eigenvalue in $\{0,\alpha,\beta,\alpha+\beta\}$.  \\
Assume further that there are idempotents $p$ and $q$ such that $u=\alpha.p+\beta.q$.
The assumption on the spectra of $u$ implies that $p$ and $q$ have no common eigenvector, hence
$$\Ker p \cap \Ker q=\Ker p \cap \im q=\im p \cap \Ker q=\im p \cap \im q=\{0\}.$$
As a consequence $\dim \Ker p=\dim \Ker q=\dim \im p=\dim \im q$ and $n$ is even.
It follows that the various kernels and images of $p$ and $q$ all have dimension $m$ for $m:=\dfrac{n}{2}\cdot$
By gluing together a basis of $\Ker q$ and one of $\Ker p$, we obtain a basis $\bfB$ of $E$, together with
square matrices $A \in \Mat_m(\K)$ and $B \in \Mat_m(\K)$
such that
$$M_\bfB(p)=\begin{bmatrix}
I_m & 0 \\
A & 0
\end{bmatrix} \quad \text{and} \quad
M_\bfB(q)=\begin{bmatrix}
0 & B \\
0 & I_m
\end{bmatrix}.$$
Since $\im p \cap \Ker q=\{0\}$, the matrix $A$ is non-singular
By a change of basis, we can reduce the situation to the case
$$M_\bfB(p)=\begin{bmatrix}
I_m & 0 \\
\frac{1}{\alpha}\,I_m & 0
\end{bmatrix} \quad \text{and} \quad
M_\bfB(q)=\begin{bmatrix}
0 & \frac{1}{\beta}\,C \\
0 & I_m
\end{bmatrix}$$
for some $C \in \Mat_m(\K)$, so that
$$M_\bfB(u)=\begin{bmatrix}
\alpha.I_m & C \\
I_m & \beta.I_m
\end{bmatrix}.$$
Conversely, for every $C \in \Mat_m(\K)$, the matrix
$$\begin{bmatrix}
\alpha.I_m & C \\
I_m & \beta.I_m
\end{bmatrix}
=\alpha.
\begin{bmatrix}
I_m & 0 \\
\frac{1}{\alpha}\,I_m & 0
\end{bmatrix}
+\beta.\begin{bmatrix}
0 & \frac{1}{\beta}\,C \\
0 & I_m
\end{bmatrix}
$$
is an $(\alpha,\beta)$-composite.

\vskip 2mm
\noindent We have thus proven that, for every $M \in \Mat_n(\K)$ with no eigenvalue in $\{0,\alpha,\beta,\alpha+\beta\}$,
the following conditions are equivalent:
\begin{enumerate}[(i)]
\item $M$ is an $(\alpha,\beta)$-composite;
\item The integer $n$ is even and there exists $C\in \Mat_{n/2}(\K)$ such that
$$M \sim \begin{bmatrix}
\alpha.I_{n/2} & C \\
I_{n/2} & \beta.I_{n/2}
\end{bmatrix}.$$
\end{enumerate}
We will now characterize this situation in terms of elementary factors:

\begin{prop}\label{not21}
Let $M \in \Mat_n(\K)$ with no eigenvalue in $\{0,\alpha,\beta,\alpha+\beta\}$.
The following conditions are then equivalent:
\begin{enumerate}[(i)]
\item The elementary factors of $M$ are all polynomials of $(X-\alpha)\,(X-\beta)$.
\item The integer $n$ is even and there exists $N\in \Mat_{n/2}(\K)$ such that
$$M \sim \begin{bmatrix}
\alpha.I_{n/2} & N \\
I_{n/2} & \beta.I_{n/2}
\end{bmatrix}.$$
\item $M$ is an $(\alpha,\beta)$-composite.
\end{enumerate}
\end{prop}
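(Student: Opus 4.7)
The equivalence (ii) $\Leftrightarrow$ (iii) has already been established by the discussion preceding the statement, so the plan is to prove (i) $\Leftrightarrow$ (ii). Both implications will hinge on a central sub-lemma: \emph{if $Q \in \K[Y]$ satisfies $Q(0) \neq 0$ and $R(X) := (X-\alpha)\,(X-\beta)$, then, writing $d := \deg Q$, the matrix $M_Q := \begin{bmatrix} \alpha\,I_d & C(Q) \\ I_d & \beta\,I_d \end{bmatrix}$ is similar to the companion matrix $C(Q(R))$.}

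To prove the sub-lemma, I would first compute the characteristic polynomial of $M_Q$. Applying the block-determinant formula for matrices with pairwise commuting blocks to $X\,I - M_Q$ (three of its four blocks are scalar) yields $\chi_{M_Q}(X) = \det\bigl(R(X)\,I_d - C(Q)\bigr) = Q(R(X))$, of degree $2d$. The main effort is then to show $\mu_{M_Q} = \chi_{M_Q}$. A direct block calculation gives $(M_Q - \alpha\,I)\,(M_Q - \beta\,I) = D(C(Q),\,C(Q))$, whose minimal polynomial is $Q$, so $\mu_{M_Q}$ divides $Q(R)$. For the reverse divisibility, I would exploit that $\K[X]$ is a free $\K[R]$-module of rank $2$ with basis $\{1,\,X-\alpha\}$ (by a simple degree argument), so every $P \in \K[X]$ writes uniquely as $P(X) = \widetilde{A}(R(X)) + (X-\alpha)\,\widetilde{B}(R(X))$ with $\widetilde{A},\widetilde{B} \in \K[Y]$. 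Expanding $P(M_Q)$ block-wise and using $R(M_Q) = D(C(Q),\,C(Q))$, the four resulting blocks vanish simultaneously if and only if $Q \mid \widetilde{A}$ and $Q \mid \widetilde{B}$, i.e.\ if and only if $Q(R) \mid P$. Hence $\mu_{M_Q} = Q(R) = \chi_{M_Q}$, so $M_Q$ is cyclic and similar to $C(Q(R))$.

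Given the sub-lemma, (ii) $\Rightarrow$ (i) is obtained by writing $N$ in elementary rational canonical form $N \sim D(C(Q_1),\ldots,C(Q_r))$; a block-basis permutation turns the matrix of condition (ii) into $D(M_{Q_1},\ldots,M_{Q_r})$, which is similar by the sub-lemma to $D(C(Q_1(R)),\ldots,C(Q_r(R)))$, so the elementary factors of $M$ are the polynomials $Q_i(R)$ of $R$. Conversely, (i) together with the rational canonical form gives $M \sim D(C(Q_1(R)),\ldots,C(Q_r(R)))$ for suitable $Q_i \in \K[Y]$ (and $n = 2\sum \deg Q_i$ is automatically even); the spectral hypothesis forces $Q_i(0) = Q_i(R(\alpha)) \neq 0$, so the sub-lemma applies in reverse to each block, and reassembly produces $M \sim \begin{bmatrix} \alpha\,I & D(C(Q_1),\ldots,C(Q_r)) \\ I & \beta\,I \end{bmatrix}$, which is condition (ii). The main obstacle is the reverse divisibility $Q(R) \mid \mu_{M_Q}$ inside the sub-lemma, which rests on the free-module decomposition of $\K[X]$ over $\K[R]$ and the ensuing block computation.
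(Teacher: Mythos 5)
Your proposal is correct and follows essentially the same route as the paper: the heart of both arguments is the sub-lemma that $\begin{bmatrix}\alpha I_d & C(Q)\\ I_d & \beta I_d\end{bmatrix}\sim C(Q(R))$, proved by computing $(M_Q-\alpha I)(M_Q-\beta I)=D(C(Q),C(Q))$ and using the decomposition of $\K[X]$ as a free rank-$2$ module over $\K[R]$ with basis $\{1,X-\alpha\}$ to pin down the minimal polynomial, followed by splitting into companion blocks of the elementary factors and reassembling. The only cosmetic differences are your extra (and unneeded) hypothesis $Q(0)\neq 0$ and your separate computation of the characteristic polynomial, which the paper sidesteps by a degree count.
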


\noindent We will start with a simple situation:

\begin{lemme}
Let $P \in \K[X]$ be a monic polynomial of degree $n \geq 1$, and set
$Y=(X-\alpha)\,(X-\beta)$.
Then
$$\begin{bmatrix}
\alpha.I_n & C(P) \\
I_n & \beta.I_n
\end{bmatrix} \, \sim \,C\bigl(P(Y)\bigr).$$
\end{lemme}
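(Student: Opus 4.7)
The plan is to show that the block matrix $M := \begin{bmatrix} \alpha I_n & C(P) \\ I_n & \beta I_n \end{bmatrix}$ has both characteristic and minimal polynomial equal to $P(Y)$, where $Y := (X-\alpha)(X-\beta)$. Since $\deg P(Y) = 2n$ equals the size of $M$, this means $M$ is cyclic with characteristic polynomial $P(Y)$, and the desired similarity $M \sim C(P(Y))$ then follows from the standard characterization of companion matrices as the unique cyclic matrices (up to similarity) with prescribed characteristic polynomial.

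For the characteristic polynomial, I apply the block-determinant identity $\det\begin{bmatrix} U & V \\ W & Z \end{bmatrix} = \det(UZ-VW)$, which is valid whenever $W$ and $Z$ commute. Here, with $W = -I_n$ and $Z = (X-\beta)I_n$, commutativity is trivial, so
\[
\det(X I_{2n} - M) = \det\bigl((X-\alpha)(X-\beta)\,I_n - C(P)\bigr) = P\bigl((X-\alpha)(X-\beta)\bigr) = P(Y).
\]

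The core of the argument is the minimal polynomial computation. A direct block multiplication yields the key identity
\[
(M-\alpha I_{2n})(M-\beta I_{2n}) = \begin{bmatrix} C(P) & 0 \\ 0 & C(P) \end{bmatrix},
\]
so $Y(M)$ is block diagonal with both diagonal blocks equal to $C(P)$, and hence $Q(Y(M)) = \begin{bmatrix} Q(C(P)) & 0 \\ 0 & Q(C(P)) \end{bmatrix}$ for any $Q \in \K[T]$. Next I invoke the elementary fact that $\K[X]$ is a free $\K[Y]$-module with basis $\{1, X\}$: any $R \in \K[X]$ writes uniquely as $R(X) = A(Y) + X\,B(Y)$ with $A, B \in \K[T]$. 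Existence follows by iterating the relation $X^2 = Y + (\alpha+\beta)X - \alpha\beta$, and uniqueness follows from a parity-of-degree argument in $X$ (the degrees in $X$ of $A(Y)$ and of $X B(Y)$, if nonzero, are respectively even and odd, so they cannot cancel). Substituting $M$ for $X$ and expanding via the block form of $M$ then yields
\[
R(M) = \begin{bmatrix} A(C(P)) + \alpha\,B(C(P)) & C(P)\,B(C(P)) \\ B(C(P)) & A(C(P)) + \beta\,B(C(P)) \end{bmatrix}.
\]
If $R(M) = 0$, the lower-left block forces $B(C(P)) = 0$, and then the upper-left block forces $A(C(P)) = 0$. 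Since the minimal polynomial of $C(P)$ is $P$, this yields $P \mid A$ and $P \mid B$ in $\K[T]$, hence $P(Y) \mid R$ in $\K[X]$. The minimal polynomial of $M$ is therefore $P(Y)$.

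The hard part will be keeping the bookkeeping in Step~3 clean: both the unique decomposition $R = A(Y) + X\,B(Y)$ and the explicit block formula for $R(M)$ are elementary, but they form the technical heart of the argument. Everything else reduces to the classical block-determinant identity and to the standard dictionary between cyclic matrices and their companion forms.
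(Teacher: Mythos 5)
Your proof is correct and follows essentially the same route as the paper: both rest on the key computation $(M-\alpha I_{2n})(M-\beta I_{2n})=\begin{bmatrix} C(P) & 0\\ 0 & C(P)\end{bmatrix}$, the decomposition of an arbitrary polynomial over $\K[Y]$ (you use the basis $\{1,X\}$ where the paper uses $\{1,X-\alpha\}$, an immaterial difference), and the conclusion that $M$ is non-derogatory with minimal polynomial $P(Y)$ of degree $2n$. Your separate block-determinant computation of the characteristic polynomial is a fine sanity check but redundant, since Cayley--Hamilton already forces the characteristic polynomial to equal the degree-$2n$ minimal polynomial.
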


\begin{proof}
Setting $M:=\begin{bmatrix}
\alpha\, I_n & C(P) \\
I_n & \beta.I_n
\end{bmatrix}$, it will suffice to prove that
$P(Y)$, which has degree $2n$, is the minimal polynomial of $M$.
Simple computation shows that
$$(M-\alpha.I_n)\,(M-\beta.I_n)=\begin{bmatrix}
C(P) & 0 \\
0 & C(P)
\end{bmatrix},$$ which proves that
$P(Y)$ is an annihilator polynomial of $M$.  \\
Conversely, let $Q\in \K[X]$ be an annihilator polynomial of $M$.
The sequence
$$\bigl(1,X-\alpha,(X-\alpha)(X-\beta),\dots,(X-\alpha)^k(X-\beta)^k,(X-\alpha)^{k+1}(X-\beta)^k,\dots\bigr)$$
is clearly a basis of $\K[X]$, so we may split
$$Q=Q_1(Y)+(X-\alpha)\,Q_2(Y)$$
for some polynomials $Q_1$ and $Q_2$ in $\K[X]$.
Hence
\begin{align*}
Q(M)& =\begin{bmatrix}
Q_1\bigl(C(P)\bigr) & 0 \\
0 & Q_1\bigl(C(P)\bigr)
\end{bmatrix}
+\begin{bmatrix}
0 & C(P) \\
I_n & (\beta-\alpha).I_n
\end{bmatrix}\times \begin{bmatrix}
Q_2\bigl(C(P)\bigr) & 0 \\
0 & Q_2\bigl(C(P)\bigr)
\end{bmatrix} \\
& =\begin{bmatrix}
Q_1\bigl(C(P)\bigr) & ? \\
Q_2\bigl(C(P)\bigr) & ?
\end{bmatrix}.
\end{align*}
Since $Q(M)=0$, we deduce that $P$ divides $Q_1$ and $Q_2$, so $Q$ is a multiple of $P(Y)$. This proves that
$P(Y)$ is the minimal polynomial of $M$.
\end{proof}

\begin{proof}[Proof of Proposition \ref{not21}]
We have already proven that (ii) is equivalent to (iii).
For $A \in \Mat_m(\K)$, set
$$\varphi(A):=\begin{bmatrix}
\alpha.I_m & A \\
I_m & \beta.I_m
\end{bmatrix}.$$
\begin{itemize}
\item Assume (i) holds, and let $P_1,\dots,P_N$ denote the elementary factors of $M$.
For $k \in \lcro 1,N\rcro$, write $P_k=Q_k((X-\alpha)(X-\beta))$ for some $Q_k \in \K[X]$.
Hence
$$M \sim D\bigl(C(P_1),\dots,C(P_N)\bigr)$$
and, for every $k \in \lcro 1,N\rcro$, the companion matrix $C(P_k) \sim \varphi(C(Q_k))$ is an $(\alpha,\beta)$-composite,
so $M$ is an $(\alpha,\beta)$-composite, which in turn proves (ii).
\item Assume (ii) holds, and let $A \in \Mat_{n/2}(\K)$ such that $\varphi(A) \sim M$.
Let $Q_1,\dots,Q_N$ denote the elementary factors of $A$, so
$A \sim D\bigl(C(Q_1),\dots,C(Q_N)\bigr)$.
Set $P_k:=Q_k((X-\alpha)(X-\beta))$ for $k \in \lcro 1,N\rcro$.
A simple permutation of the basis shows then that
$$M \sim \varphi(A) \sim D\bigl(\varphi(C(Q_1)),\dots,\varphi(C(Q_n))\bigr)
\sim  D\bigl(C(P_1)),\dots,C(P_n)\bigr).$$
If $P_i$ divides $P_{i+1}$ for every suitable $i$, the $P_k$'s are the elementary factors of $M$,
which proves (i).
\end{itemize}
\end{proof}

\section{When all eigenvalues belongs to $\{0,\alpha,\beta,\alpha+\beta\}$}

Recall first Proposition 1 of \cite{HP}, the proof of which holds
regardless of the field $\K$:

\begin{prop}
Any nilpotent matrix is a difference of two idempotents.
\end{prop}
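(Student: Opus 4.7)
The plan is to reduce to the case of a single nilpotent Jordan block and then to exhibit an explicit pair of idempotents whose difference is that block.

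The reduction is immediate: by the elementary divisor theorem applied to an endomorphism whose minimal polynomial is a power of $X$ (an argument valid over any field), a nilpotent matrix in $\Mat_n(\K)$ is similar to $D(J_{n_1},\dots,J_{n_r})$ for suitable $n_i$. Since being a $(1,-1)$-composite is invariant under similarity (Remark \ref{elremark}.(i)) and stable under block-diagonal assembly (Remark \ref{elremark}.(ii)), it is enough to prove that every nilpotent Jordan block $J_n$ is a difference of two idempotents.

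For a single block $J_n$, I would group the standard basis vectors into consecutive pairs $\{e_1,e_2\},\{e_3,e_4\},\dots$ (with $e_n$ as a trailing singleton when $n$ is odd), and define $P \in \Mat_n(\K)$ to have a $1$ at each position $(2k-1,2k-1)$ for $1 \le k \le \lceil n/2 \rceil$, a $1$ at each position $(2k-1,2k)$ for $1 \le k \le \lfloor n/2 \rfloor$, and zeros elsewhere. The non-zero rows of $P$ are only the odd-indexed ones: each consists of a single $1$ on the diagonal, plus possibly an extra $1$ in the adjacent even column (which is itself an all-zero row of $P$); a routine row-by-row product then yields $P^2=P$. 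Setting $Q := P - J_n$, an inspection shows that $Q$ carries a $1$ at each $(2k-1,2k-1)$ and a $-1$ at each $(2k,2k+1)$, and nothing else. Each row of $Q$ thus has at most one non-zero entry, and that entry lands in a row on which $Q$ is concentrated on the diagonal; the same mechanical row-by-row check gives $Q^2 = Q$. By construction $P - Q = J_n$.

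The main obstacle is really combinatorial: guessing a grouping of basis vectors that makes both $P$ and $P - J_n$ simultaneously idempotent. Once the odd/even pairing above is in hand, the verification uses only addition and subtraction of integers; no division is involved, so the argument goes through verbatim over any field $\K$, including in characteristic~$2$ (where the $-1$'s in $Q$ simply become $1$'s).
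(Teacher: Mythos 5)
Your construction is correct. For the record: $P$ is block-diagonal with $2\times 2$ blocks $\begin{bmatrix} 1 & 1 \\ 0 & 0\end{bmatrix}$ (and a trailing $1\times 1$ block $[1]$ when $n$ is odd), hence visibly idempotent; and writing $Q=\sum_k E_{2k-1,2k-1}-\sum_k E_{2k,2k+1}$ in terms of matrix units, the only surviving products are $E_{2k-1,2k-1}E_{2k-1,2k-1}=E_{2k-1,2k-1}$ and $E_{2k,2k+1}E_{2k+1,2k+1}=E_{2k,2k+1}$, which gives $Q^2=Q$ exactly as you claim. The reduction to a single block via Remark \ref{elremark} and the observation that only the constants $0$, $1$, $-1$ occur (so the argument is characteristic-free) are both sound. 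Note, however, that the paper does not prove this proposition at all: it simply cites Proposition 1 of \cite{HP} and records that the proof given there is valid over any field. Your explicit odd/even pairing therefore supplies a self-contained argument where the paper relies on an external reference, and it has the merit of making the field-independence (in particular the validity in characteristic $2$, which is the point the paper needs) completely transparent rather than something one must check against \cite{HP}.
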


\noindent From this, we easily derive:

\begin{prop}\label{nilpotent}
Every nilpotent matrix is an $(\alpha,-\alpha)$-composite.
\end{prop}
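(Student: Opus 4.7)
The plan is to reduce directly to the preceding proposition (every nilpotent matrix is a difference of two idempotents) by a rescaling trick, exploiting the hypothesis that $\alpha \in \K^*$.

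Concretely, let $N \in \Mat_n(\K)$ be nilpotent. Since $\alpha \neq 0$, the scaled matrix $\alpha^{-1}\,N$ is still nilpotent (the nilpotency index is preserved), so the preceding proposition applies to it and yields idempotents $p$ and $q$ in $\Mat_n(\K)$ with
$$\alpha^{-1}\,N = p - q.$$
Multiplying both sides by $\alpha$ gives $N = \alpha\,p + (-\alpha)\,q$, which is precisely the definition of an $(\alpha,-\alpha)$-composite.

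I expect no real obstacle here: the whole content of the statement is already encoded in Proposition 1 of \cite{HP}, and the only additional input is that multiplying by the nonzero scalar $\alpha^{-1}$ (resp.\ $\alpha$) preserves nilpotency and commutes with the linear combination. Note that the argument makes no assumption on $\car(\K)$, matching the fact that the quoted proposition from \cite{HP} itself holds in arbitrary characteristic.
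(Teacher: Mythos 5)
Your proof is correct and matches the paper's (implicit) argument: the paper states the result as an easy consequence of the quoted Proposition~1 of \cite{HP}, and the rescaling by $\alpha^{-1}$ is exactly the intended derivation. Nothing further is needed.
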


\noindent The next proposition will be the last key to our theorems:

\begin{prop}\label{alphabeta}
Let $M \in \Mat_n(\K)$ be a triangularizable matrix with all eigenvalues in $\{\alpha,\beta\}$. \\
Assume $\alpha \neq \beta$. The following conditions are then equivalent:
\begin{enumerate}[(i)]
\item $M$ is an $(\alpha,\beta)$-composite;
\item The sequences $(n_k(M,\alpha))_{k \geq 1}$ and $(n_k(M,\beta))_{k \geq 1}$ are intertwined.
\end{enumerate}
\end{prop}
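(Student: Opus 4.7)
The plan is to put $M=\alpha P+\beta Q$ into a canonical block form, identify the Jordan structure of $M$ at $\alpha$ and $\beta$ with those of the products $BA$ and $AB$ of two rectangular matrices, and then invoke Flanders' interlacing theorem for nilpotent products.

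For the forward direction, suppose $M=\alpha P+\beta Q$. Since $\Sp(M)\subseteq\{\alpha,\beta\}$ and $\alpha,\beta\in\K^*$, any nonzero vector in $\Ker P\cap\Ker Q$ would yield the eigenvalue $0$ and any nonzero vector in $\im P\cap\im Q$ the eigenvalue $\alpha+\beta$; both are forbidden, so these two intersections are trivial. A dimension count then gives $m:=\rg(P)$, $m':=\rg(Q)=n-m$, and $E=\Ker Q\oplus\Ker P$. In a basis adapted to this decomposition, one obtains the customary form
\[
P\sim\begin{pmatrix}I_m & 0\\ A & 0\end{pmatrix},\quad Q\sim\begin{pmatrix}0 & B\\ 0 & I_{m'}\end{pmatrix},\quad M\sim\begin{pmatrix}\alpha I_m & \beta B\\ \alpha A & \beta I_{m'}\end{pmatrix},
\]
and the identity $(M-\alpha I)(M-\beta I)=\alpha\beta\,D(BA,AB)$ (which also follows from Corollary \ref{fundcor} upon computing $(P-Q)^2=D(I-BA,I-AB)$) forces both $BA\in\Mat_m(\K)$ and $AB\in\Mat_{m'}(\K)$ to be nilpotent. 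The next key step is to show that the projection on the first $m$ coordinates restricts to an isomorphism from the $\alpha$-generalized eigenspace $E_\alpha$ onto $\K^m$, and that under this identification $M|_{E_\alpha}-\alpha\,\id$ differs from $-\tfrac{\alpha\beta}{\beta-\alpha}\,BA$ by an invertible operator commuting with $BA$, hence has the same Jordan type as $BA$; similarly $M|_{E_\beta}-\beta\,\id$ carries the Jordan type of $AB$. Flanders' theorem applied to the nilpotent pair $(BA,AB)$ now supplies $n_{k+1}(BA,0)\le n_k(AB,0)$ together with the symmetric inequality, which translates verbatim into the intertwined condition on $(n_k(M,\alpha))$ and $(n_k(M,\beta))$.

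For the converse, a short combinatorial observation shows that the intertwined condition is equivalent to being able to pair the $\alpha$-block sizes $a_1\ge a_2\ge\cdots$ and the $\beta$-block sizes $b_1\ge b_2\ge\cdots$ of $M$ in decreasing order with $|a_i-b_i|\le 1$ throughout (any unpaired leftover blocks being necessarily of size at most $1$). For each pair of sizes $(k+1,k)$ one chooses $A_i=(I_k\mid 0)$ and $B_i=\binom{0}{I_k}$; a direct computation gives $B_iA_i\sim J_0(k+1)$ and $A_iB_i\sim J_0(k)$. Equal-size pairs are handled by $A_i=I_k,B_i=J_k$, and isolated size-$1$ blocks are trivial. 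Direct-summing yields rectangular matrices $A,B$ whose products realize the prescribed nilpotent Jordan types, and plugging these back into the canonical form produces idempotents $P,Q$ with $\alpha P+\beta Q$ similar to $M$.

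The main obstacle is the Jordan correspondence step: establishing the isomorphism $E_\alpha\to\K^m$ relies on showing that a vector of the form $(0,y)\in E_\alpha$ must have $y=0$, which follows from observing by induction on $k$ that $(M-\alpha I)^k(0,y)$ projects onto the second factor as $\bigl((\beta-\alpha)^k\,I+P_k(AB)\bigr)y$ with $P_k(0)=0$, an invertible operator since $\alpha\ne\beta$ and $AB$ is nilpotent; the identification of the transported nilpotent with $BA$ then absorbs higher-order corrections by again exploiting nilpotency of $BA$. Once this correspondence is in place, Flanders' theorem and its explicit constructive inverse through the above building blocks close the proof in both directions.
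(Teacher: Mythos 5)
Your proof is correct, but it takes a genuinely different route from the paper's. The paper deliberately avoids Flanders' theorem (it says so at the start of Section 7): for (i)$\Rightarrow$(ii) it block-triangularizes $M$ along the iterated kernels of $(M-\alpha I)(M-\beta I)$, reduces to two consecutive layers, and proves the key inequality $r\le q$ by an explicit rank computation ($r\le\rg A_1+\rg A_2\le q$) inside the idempotents; for (ii)$\Rightarrow$(i) it realizes pairs of Jordan blocks $J_n(\alpha)\oplus J_n(\beta)$, $J_n(\alpha)\oplus J_{n\pm1}(\beta)$ and peels off the largest blocks by induction. You instead revive the Hartwig--Putcha mechanism in full generality: from $\Sp(M)\subseteq\{\alpha,\beta\}$ you correctly get $\Ker P\cap\Ker Q=\im P\cap\im Q=\{0\}$ (only these two intersections vanish here, unlike in Section 5, but they suffice for $E=\Ker Q\oplus\Ker P$, and the argument survives $\car(\K)=2$ and $\alpha+\beta=0$ since $0,\alpha+\beta\notin\{\alpha,\beta\}$), identify the nilpotent parts of $M$ at $\alpha$ and $\beta$ with $BA$ and $AB$ via the projection argument (which I checked: the second-coordinate operator $(\beta-\alpha)^kI+P_k(AB)$ is indeed invertible, and the dimension count $\dim E_\alpha+\dim E_\beta=m+m'$ upgrades injectivity to an isomorphism), and then quote Flanders for the interlacing; your converse builds $A,B$ realizing the paired nilpotent types directly, which is cleaner than the paper's bordering induction. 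What the paper's approach buys is self-containedness and elementarity; what yours buys is brevity and a transparent explanation of \emph{why} the intertwining condition appears (it \emph{is} the $AB$ versus $BA$ phenomenon). The one thing you should make explicit is that the version of Flanders' theorem you need (term-by-term interlacing $n_{k+1}(BA,0)\le n_k(AB,0)$ of the nilpotent block data, not merely the rank inequalities $\rg\bigl((AB)^{k+1}\bigr)\le\rg\bigl((BA)^k\bigr)$) holds over an arbitrary field, since the whole point of the paper is to work without hypotheses on $\K$; it does, but it is a citation the paper's own proof is engineered to avoid.
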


\noindent By Remark \ref{elremark}.(iii), this proposition has the following corollary:

\begin{cor}\label{0alpha+beta}
Assume $\alpha +\beta \neq 0$, and let $M \in \Mat_n(\K)$ denote a
triangularizable matrix with all eigenvalues in $\{0,\alpha+\beta\}$.
The following conditions are then equivalent:
\begin{enumerate}[(i)]
\item $M$ is an $(\alpha,\beta)$-composite;
\item The sequences $(n_k(M,0))_{k \geq 1}$ and $(n_k(M,\alpha+\beta))_{k \geq 1}$ are intertwined.
\end{enumerate}
\end{cor}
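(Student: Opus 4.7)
The plan is to reduce this corollary to Proposition \ref{alphabeta} by applying the shift from Remark \ref{elremark}.(iii). Explicitly, that remark says that $M$ is an $(\alpha,\beta)$-composite iff $M-\alpha.I_n$ is a $(-\alpha,\beta)$-composite, so the idea is to verify that $M-\alpha.I_n$ falls under the hypotheses of Proposition \ref{alphabeta} with the pair $(-\alpha,\beta)$ in place of $(\alpha,\beta)$, and then translate the conclusion back to $M$.

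First I check the hypotheses. The matrix $M$ is triangularizable with all eigenvalues in $\{0,\alpha+\beta\}$, so $M-\alpha.I_n$ is triangularizable with all eigenvalues in $\{-\alpha,\beta\}$. Both scalars $-\alpha$ and $\beta$ lie in $\K^*$, and the assumption $\alpha+\beta\neq 0$ gives exactly $-\alpha\neq \beta$. Thus Proposition \ref{alphabeta} applies to $M-\alpha.I_n$ for the pair $(-\alpha,\beta)$, and it yields: $M-\alpha.I_n$ is a $(-\alpha,\beta)$-composite iff the sequences $\bigl(n_k(M-\alpha.I_n,-\alpha)\bigr)_{k\geq 1}$ and $\bigl(n_k(M-\alpha.I_n,\beta)\bigr)_{k\geq 1}$ are intertwined.

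To finish, I translate the indices. For any scalar $\mu$ one has $(M-\alpha.I_n-\mu.I_n)^k=(M-(\alpha+\mu).I_n)^k$ for every $k$, whence $n_k(M-\alpha.I_n,\mu)=n_k(M,\alpha+\mu)$. Specializing to $\mu=-\alpha$ and $\mu=\beta$ gives $n_k(M-\alpha.I_n,-\alpha)=n_k(M,0)$ and $n_k(M-\alpha.I_n,\beta)=n_k(M,\alpha+\beta)$, so the intertwining condition becomes exactly condition (ii). Combined with the equivalence supplied by Remark \ref{elremark}.(iii), this proves the corollary. There is essentially no obstacle: the only point requiring attention is that the reduction to Proposition \ref{alphabeta} needs $-\alpha\neq\beta$, and this is precisely what the hypothesis $\alpha+\beta\neq 0$ provides.
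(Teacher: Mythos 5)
Your proposal is correct and is precisely the argument the paper intends: the corollary is derived from Proposition \ref{alphabeta} via the shift $M\mapsto M-\alpha.I_n$ of Remark \ref{elremark}.(iii), with the hypothesis $\alpha+\beta\neq 0$ supplying exactly the condition $-\alpha\neq\beta$ needed to invoke the proposition for the pair $(-\alpha,\beta)$. Your explicit verification of the translation $n_k(M-\alpha.I_n,\mu)=n_k(M,\alpha+\mu)$ spells out details the paper leaves implicit, but the route is the same.
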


\noindent Assuming temporarily that Proposition \ref{alphabeta} holds, we can then prove the theorems with even numbers listed in section 1.
\begin{itemize}
\item Assume $\car(\K) \neq 2$ and $\alpha \neq \pm \beta$.
Then Theorem \ref{dSP2LCendo} follows directly from Propositions \ref{redprinc}, \ref{alphabeta} and \ref{0alpha+beta}.
\item Assume $\car(\K) \neq 2$ and $\beta=-\alpha$.
Notice that the polynomials of $(X-\alpha)\,(X+\alpha)=X^2-\alpha^2$ are simply the even polynomials. \\
The ``only if" part of Theorem \label{HPdiffendo}
then follows from Propositions \ref{redprinc}, \ref{not21} and \ref{alphabeta}.
For the ``if" part, we use the same results in conjunction with Proposition \ref{nilpotent}.

\item Assume $\car(\K)=2$ and $\beta=\alpha$.
The ``only if" part of Theorem \ref{HPsumcar2endo} then follows from Propositions \ref{redprinc} and \ref{not21}.
For the ``if" part, we use the same results in conjunction with
Proposition \ref{nilpotent} and the fact that for every nilpotent matrix $N$,
the matrix $\alpha.I_n+N$ is an $(\alpha,\alpha)$-composite since $N$ is an $(\alpha,-\alpha)$ composite.
\end{itemize}

\noindent It now only remains to prove Proposition \ref{alphabeta}: this will be done in the last section.

\section{Proof of Proposition \ref{alphabeta}}

\noindent Our proof will differ from that of
Hartwig and Putcha in \cite{HP}. More precisely, we will not rely upon the results of Flanders featured in \cite{Flanders},
but will try instead to prove the equivalence by elementary means.
We will need a few notations first.

\begin{Not}
When $p,q,r,s$ denote non-negative integers such that
$p \geq r$ and $q \geq s$, we set
$$K_{p,q}:=\begin{bmatrix}
\alpha.I_p & 0 \\
0 & \beta.I_q
\end{bmatrix} \in \Mat_{p+q}(\K) \quad \text{and} \quad
J_{p,q,r,s}:=\begin{bmatrix}
I_r & 0_{r,s}  \\
0_{p-r,r} & 0_{p-r,s} \\
0_{s,r} & -I_s \\
0_{q-s,r} & 0_{q-s,s}
\end{bmatrix} \in \Mat_{p+q,r+s}(\K).$$
\end{Not}

\noindent For the entire proof, we set a triangularizable matrix $M$
with all eigenvalues in $\{\alpha,\beta\}$.
We will simply write $n_k:=n_k(M,\alpha)$ and $m_k:=n_k(M,\beta)$ for $k \in \N^*$.

\subsection{Proof that (i) implies (ii)}

Assume that $M=\alpha.P+\beta.Q$ for some idempotents $P$ and $Q$.
The Jordan reduction theorem shows, after permuting the basis vectors,
that the matrix $M$ is similar to some block-triangular matrix
$$M'=\begin{bmatrix}
K_{n_1,m_1} & J_{n_1,m_1,n_2,m_2} & 0 & \dots & 0 \\
0 & K_{n_2,m_2} & J_{n_2,m_2,n_3,m_3} & & 0 \\
0 & 0 & K_{n_3,m_3} & \ddots & \vdots \\
\vdots &  &  & \ddots & \\
0 & \dots & &  0 & K_{n_N,m_N}
\end{bmatrix}$$
where $N$ denotes the index of the nilpotent matrix $(M-\alpha.I)\,(M-\beta.I)$.
Since the problem is invariant under similarity, we may assume that $M=M'$.

Remark that the flag of linear subspaces
which gives the previous block-decomposition of $M$
consists precisely of the iterated kernels of $(M-\alpha.I)\,(M-\beta.I)$.
Since the matrices $P$ and $Q$ commute with $(M-\alpha.I)\,(M-\beta.I)$,
they stabilize these subspaces, which proves that $P$ and $Q$ themselves decompose as
block-triangular matrices:
$$P=\begin{bmatrix}
P_{1,1} & P_{1,2} & \dots & P_{1,N} \\
0 & P_{2,2} & \dots & P_{2,N} \\
\vdots &  & \ddots & \vdots \\
0 & 0 &  & P_{N,N} \\
\end{bmatrix} \quad \text{and} \quad Q=\begin{bmatrix}
Q_{1,1} & Q_{1,2} & \dots & Q_{1,N} \\
0 & Q_{2,2} & \dots & Q_{2,N} \\
\vdots &  & \ddots & \vdots \\
0 & 0 &  & Q_{N,N} \\
\end{bmatrix}.$$
It is then clear that, for every $k \in \lcro 1,N-1\rcro$, the matrices
$\begin{bmatrix}
P_{k,k} & P_{k,k+1} \\
0 & P_{k+1,k+1}
\end{bmatrix}$ and
$\begin{bmatrix}
Q_{k,k} & Q_{k,k+1} \\
0 & Q_{k+1,k+1}
\end{bmatrix}$ are idempotents, which in turn proves that the matrix
$$\begin{bmatrix}
K_{n_k,m_k} & J_{n_k,m_k,n_{k+1},m_{k+1}} \\
0 & K_{n_{k+1},m_{k+1}}
\end{bmatrix} \quad \text{is an $(\alpha,\beta)$-composite.}$$

\noindent That the sequences $(n_k)_{k \geq 1}$ and
$(m_k)_{k \geq 1}$ are intertwined can then be deduced from the following lemma:

\begin{lemme}[Intertwinement lemma]
Let $p,q,r,s$ be non-negative integers such that $p \geq r$ and $q \geq s$. \\
Assume the block matrix
$M=\begin{bmatrix}
K_{p,q} & J_{p,q,r,s} \\
0 & K_{r,s}
\end{bmatrix}$ is an $(\alpha,\beta)$-composite.
Then $q \geq r$ and $p \geq s$.
\end{lemme}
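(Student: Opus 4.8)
The plan is to build everything on Corollary~\ref{fundcor}. Writing $M=\alpha.P+\beta.Q$, the idempotents $P$ and $Q$ commute with $D:=(M-\alpha.I)(M-\beta.I)=\alpha\beta\bigl(I-(P-Q)^2\bigr)$. A direct computation of $D$ for our block matrix shows that $D$ is square-zero, with $\im D$ of dimension $r+s$, $\Ker D$ of dimension $p+q$, and $\im D\subseteq\Ker D$; hence $P$ and $Q$ stabilize the flag $\{0\}\subseteq\im D\subseteq\Ker D\subseteq\K^{p+q+r+s}$. Alongside this I would record the spectral consequences: since $\Sp(M)=\{\alpha,\beta\}$ while neither $0$ nor $\alpha+\beta$ lies in $\{\alpha,\beta\}$ (as $\alpha,\beta\in\K^*$ and $\alpha\neq\beta$), any vector in $\Ker P\cap\Ker Q$ (resp. in $\im P\cap\im Q$) would be an eigenvector of $M$ for the eigenvalue $0$ (resp. $\alpha+\beta$); thus $\Ker P\cap\Ker Q=\{0\}$ and $\im P\cap\im Q=\{0\}$, so $\rg P+\rg Q=p+q+r+s$, and taking traces in $M=\alpha.P+\beta.Q$ forces $\rg P=p+r$ and $\rg Q=q+s$.

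Next I would descend along the flag. Restriction to $\Ker D$ and passage to $E/\Ker D$ yield two new pairs of idempotents whose weighted $(\alpha,\beta)$-sums are the endomorphisms induced by $M$; these are diagonalizable with spectrum $\{\alpha,\beta\}$, of multiplicities $(p,q)$ on $\Ker D$ and $(r,s)$ on the quotient. Re-running the spectral argument on each space gives $\rg(P|_{\Ker D})=p$, $\rg(Q|_{\Ker D})=q$, $\rg\overline P=r$, $\rg\overline Q=s$, together with the corresponding splittings of $\Ker D$ and of $E/\Ker D$ into images and into kernels. Writing $P$ and $Q$ as block-upper-triangular matrices with respect to $\Ker D\subseteq E$, the idempotency of $P$ translates into the Sylvester relation $(P|_{\Ker D})\,P_{12}+P_{12}\,\overline P=P_{12}$, which says that the off-diagonal block $P_{12}$ maps $\im\overline P$ into $\Ker(P|_{\Ker D})$ and $\Ker\overline P$ into $\im(P|_{\Ker D})$; symmetrically for $Q_{12}$. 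Finally, the off-diagonal block of $M$ records the constraint $J_{p,q,r,s}=\alpha.P_{12}+\beta.Q_{12}$, whose $\alpha$-block has rank $r$ and lands in the $\alpha$-eigenspace, and whose $\beta$-block has rank $s$ and lands in the $\beta$-eigenspace.

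It remains to extract $q\geq r$ and $p\geq s$. These two inequalities are interchanged by exchanging the roles of $\alpha$ and $\beta$ (hence of $P$ and $Q$), which preserves the hypothesis that $M$ is an $(\alpha,\beta)$-composite; it therefore suffices to prove $q\geq r$, and this is the step I expect to be the main obstacle. The point is that the rank-$r$ block of $J_{p,q,r,s}$ a priori occupies only the $\alpha$-eigenspace, of dimension $p$, which by itself yields just the already-known bound $r\leq p$; to reach $r\leq q$ one must use that this block is manufactured as $\alpha.P_{12}+\beta.Q_{12}$ while $P_{12}$ and $Q_{12}$ respect the image/kernel gradings of $P$ and of $Q$ respectively — two gradings of $\Ker D$ that do not in general agree with the splitting of $\Ker D$ into its $\alpha$- and $\beta$-eigenspaces. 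It is exactly this incompatibility that should force room of dimension $\geq r$ in the $\beta$-eigenspace. A naive injection $\im(M-\alpha.I)\cap\Ker(M-\alpha.I)\to\Ker(M-\beta.I)$, $x\mapsto Q x=\tfrac{\alpha}{\beta}(I-P)x$, does not work, since it vanishes on the common eigenvectors lying in $\im P\cap\Ker Q$; accordingly I would not try to build a single injection but instead argue by a global dimension count, balancing the dimensions of $\im(P|_{\Ker D})$, $\Ker(Q|_{\Ker D})$ and the images of the constrained blocks $P_{12},Q_{12}$ against the rank-$r$ requirement on $J_{p,q,r,s}$, to conclude $r\leq q$.
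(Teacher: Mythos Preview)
Your setup is sound and tracks the paper's approach closely: the block-triangular decomposition of $P$ and $Q$ along the flag $\{0\}\subset\im D\subset\Ker D\subset E$, the rank computations via trace, and the Sylvester relation $AB+BC=B$ (your $(P|_{\Ker D})P_{12}+P_{12}\overline{P}=P_{12}$) are all exactly what the paper uses. You also correctly diagnose where the difficulty lies and why a naive injection fails.

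But the proof stops precisely at the hard step. Your proposed ``global dimension count'' is not an argument: you have not said which dimensions are being balanced or why the inequality $r\le q$ drops out, and there is no indication that such a count, using only the Sylvester relation for $P$, can succeed. The paper closes the gap not by an abstract count but by an explicit matrix computation that you have not reached. First, from the idempotency of both $A:=P|_{\Ker D}$ and $\frac{1}{\beta}(K_{p,q}-\alpha A)$ one extracts the precise shape $A=\begin{bmatrix} I_p & A_2 \\ A_1 & 0\end{bmatrix}$ in the $\alpha/\beta$-eigenbasis of $K_{p,q}$, and then $A^2=A$ forces $A_2A_1=0$, whence $\rg A_1+\rg A_2\le q$. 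Second, commutation of $P$ with $D$ at the finer level (the flag through $\im D$) refines the blocks $A_1,A_2$ and $C:=\overline{P}$ further. Third --- and this is the missing idea --- one plays the relation $AB+BC=B$ against its companion for $Q$ (obtained from $Q=\frac{1}{\beta}(M-\alpha P)$), and after eliminating most of $B$ one reads off from the $(1,1)$-block the identity
\[
D_2B_1+B_2D_1+L_2C_1=\tfrac{1}{\alpha}\,I_r,
\]
where $D_1,D_2,L_2$ are sub-blocks of $A_1,A_2$ and $B_1,C_1$ are sub-blocks of $B$. This yields $r\le\rg D_1+\rg[D_2\ L_2]\le\rg A_1+\rg A_2\le q$. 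The mechanism is thus a rank-of-a-sum bound fed by an identity coming from the \emph{pair} of Sylvester relations, not a dimension count using either one alone; your Sylvester relation for $P$ by itself only controls how $P_{12}$ interacts with the $P$-grading, and to force contact with the $\beta$-eigenspace one must bring in the second relation explicitly, which is where the $I_r$ appears.
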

In order to prove this, we will extract two matrices $A_1$ and $A_2$ such that
$$r \leq \rg(A_1)+\rg(A_2) \leq q.$$

\begin{proof}
Set
$K_1:=K_{p,q}$, $K_2:=K_{r,s}$ and $K_3:=J_{p,q,r,s}$, so that $M=\begin{bmatrix}
K_1 & K_3 \\
0 & K_2
\end{bmatrix}$. We choose two idempotents $P$ and $Q$ such that $M=\alpha.P+\beta.Q$.
Remark foremost that
$$(M-\alpha.I_{p+q})\,(M-\beta.I_{p+q})=\begin{bmatrix}
0 & I' \\
0 & 0
\end{bmatrix}, \quad
\text{with} \; I'=\begin{bmatrix}
(\alpha-\beta).I_r & 0_{r,s}  \\
0_{p-r,r} & 0_{p-r,s} \\
0_{s,r} & (\alpha-\beta).I_s \\
0_{q-s,r} & 0_{q-s,s}
\end{bmatrix} \in \Mat_{p+q,r+s}(\K).$$
The commutation argument already used earlier proves that there are three matrices $A \in \Mat_{p+q}(\K)$, $B \in \Mat_{p+q,r+s}(\K)$
and $C \in \Mat_{r+s}(\K)$ such that
$$P=\begin{bmatrix}
A & B \\
0 & C
\end{bmatrix}.$$
The idempotent $Q$ also has a decomposition of this type.
Consequently, both $A$ and $\frac{1}{\beta}\,(K_1-\alpha\,A)$
are idempotents, so
$$\beta \,(K_1-\alpha\,A)=(K_1-\alpha\,A)^2=K_1^2-\alpha\,(AK_1+K_1A)+\alpha^2 A^2
=K_1^2-\alpha\,(AK_1+K_1A)+\alpha^2 A.$$
From the definition of $K_1$, it is clear that $K_1^2=(\alpha+\beta).K_1-\alpha\,\beta.I_{p+q}$, and we deduce that
$$\alpha.K_1-\alpha\,(AK_1+K_1A)+\alpha\,(\alpha+\beta).A=\alpha\,\beta.I_{p+q}.$$
From this identity and the fact that $\alpha(\alpha-\beta)\neq 0$, we derive that there are matrices
$A_1 \in \Mat_{q,p}(\K)$ and $A_2 \in \Mat_{p,q}(\K)$ such that
$A=\begin{bmatrix}
I_p & A_2 \\
A_1 & 0
\end{bmatrix}$. Identity $A^2=A$ then entails that $A_2\,A_1=0$,
hence
$$\rg A_1+\rg A_2 \leq q.$$
We will now try to prove that $r \leq \rg A_1+\rg A_2$. \\
Commutation of $P$ with $(M-\alpha.I_n)\,(M-\beta.I_n)$ yields
that there are matrices $D_1 \in \Mat_{s,r}(\K)$, $L_1 \in \Mat_{s,p-r}(\K)$,
$N_1 \in \Mat_{q-s,p-r}(\K)$,
$D_2 \in \Mat_{r,s}(\K)$, $L_2 \in \Mat_{r,q-s}(\K)$, and $N_2 \in \Mat_{p-r,q-s}(\K)$ such that
$$A_1=\begin{bmatrix}
D_1 & L_1 \\
0 & N_1
\end{bmatrix} \quad , \quad A_2=\begin{bmatrix}
D_2 & L_2 \\
0 & N_2
\end{bmatrix} \quad \text{and} \quad
C=\begin{bmatrix}
I_r & D_2 \\
D_1 & 0
\end{bmatrix}.$$
Using again the identity $P^2=P$, we obtain:
$$A\,B+B\,C=B.$$
Since $Q=\frac{1}{\beta}\,(M-\alpha.P)$ and $Q$ is also idempotent, the corresponding identity for $Q$ yields:
$$\frac{1}{\beta}\,(K_1-\alpha.A)\,\frac{1}{\beta}\,(K_3-\alpha.B)
+\frac{1}{\beta}\,(K_3-\alpha.B)\,\frac{1}{\beta}\,(K_2-\alpha.C)=\frac{1}{\beta}\,(K_3-\alpha.B),$$
therefore
$$\beta\,K_3=K_1K_3+K_3K_2-\alpha\,(K_1B+BK_2)-\alpha\,(K_3C+AK_3)+\alpha\,(\alpha+\beta)\,B.$$
Using a block-decomposition of $B$, a simple computation allows us to
deduce from the previous identity that
there are matrices $B_1 \in \Mat_{s,r}(\K)$, $C_1 \in \Mat_{q-s,r}(\K)$ and
$B_2 \in \Mat_{r,s}(\K)$ such that
$$B=\begin{bmatrix}
\frac{1}{\alpha}\,I_r & B_2 \\
0 &  ? \\
B_1 & ? \\
C_1 & ?
\end{bmatrix}.$$
Computation of the first $r \times r$ block in the identity $AB+BC=B$ then yields:
$$D_2\,B_1+B_2\,D_1+L_2\,C_1=\frac{1}{\alpha}\,I_{r.}$$
For every $X \in \Ker D_1$, one has $D_2\,B_1\,X+L_2\,C_1\,X=\frac{1}{\alpha}\,X$,
which proves that
$$\dim (\im D_2+\im L_2) \geq \dim \Ker D_1,$$
hence
$$\rg \begin{bmatrix}
D_2 & L_2
\end{bmatrix} \geq r-\rg(D_1).$$
It follows that
$$r \leq \rg(D_1)+\rg \begin{bmatrix}
D_2 & L_2
\end{bmatrix} \leq \rg(A_1)+\rg(A_2).$$
This finally proves $r \leq q$. By an argument of symmetry, one also has $s \leq p$.
\end{proof}

\subsection{Proof of (ii) $\Rightarrow$ (i)}

\noindent We start with three special cases:

\begin{prop}
Let $n \geq 1$. Then each of the three matrices
$$A:=\begin{bmatrix}
J_n(\alpha) & 0 \\
0 & J_n(\beta)
\end{bmatrix} \quad , \quad
B:=\begin{bmatrix}
J_n(\alpha) & 0 \\
0 & J_{n+1}(\beta)
\end{bmatrix} \quad
\text{and} \quad
B':=\begin{bmatrix}
J_{n+1}(\alpha) & 0 \\
0 & J_n(\beta)
\end{bmatrix}$$
is an $(\alpha,\beta)$-composite.
\end{prop}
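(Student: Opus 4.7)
The plan is to construct, for each of the three matrices, an explicit similar matrix of block form $\begin{bmatrix} \alpha I_p & \beta Y \\ \alpha X & \beta I_q \end{bmatrix}$, which is manifestly an $(\alpha,\beta)$-composite because it decomposes as
$$\alpha \begin{bmatrix} I_p & 0 \\ X & 0 \end{bmatrix} + \beta \begin{bmatrix} 0 & Y \\ 0 & I_q \end{bmatrix},$$
with both summands idempotent for any $X \in \Mat_{q,p}(\K)$ and $Y \in \Mat_{p,q}(\K)$. The central computational device is the block identity
$$(M - \alpha I)(M-\beta I) = \alpha\beta\begin{bmatrix} YX & 0 \\ 0 & XY \end{bmatrix},$$
which lets me read off the minimal polynomial and Jordan structure of $M$ in terms of $YX$ and $XY$. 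For the matrix $A$, where $p = q = n$, I would appeal directly to the lemma of Section~5 applied to the polynomial $X^n$: this already yields $\begin{bmatrix} \alpha I_n & C(X^n) \\ I_n & \beta I_n \end{bmatrix} \sim C\bigl((X-\alpha)^n(X-\beta)^n\bigr)$, and $A$ is cyclic with this same minimal polynomial (its two Jordan blocks correspond to coprime eigenvalues), so $A$ is similar to this companion matrix and hence to the block matrix on the left.

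The substantive case is $B$, which has block sizes $n$ and $n+1$. I would take $X = \begin{bmatrix} I_n \\ 0 \end{bmatrix} \in \Mat_{n+1,n}(\K)$ and $Y = \begin{bmatrix} 0 & I_n \end{bmatrix} \in \Mat_{n,n+1}(\K)$; a direct verification yields $YX = J_n$ and $XY = J_{n+1}$. Plugging these into the identity above, one checks that the minimal polynomial of the resulting $M$ is exactly $(X-\alpha)^n(X-\beta)^{n+1}$: the product $(M-\alpha I)^n(M-\beta I)^{n+1}$ vanishes because the lone nonzero row of $J_{n+1}^n$ meets the zero last row of $X$, while $(M-\alpha I)^{n-1}(M-\beta I)^{n+1}$ and $(M-\alpha I)^n(M-\beta I)^n$ remain nonzero thanks to $J_n^{n-1}\neq 0$ and $J_{n+1}^n\neq 0$. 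Moreover, by Schur complementation, $\dim \Ker(M-\alpha I) = n - \rg(YX) = 1$ and $\dim \Ker(M-\beta I) = (n+1) - \rg(XY) = 1$, so $M$ has a single Jordan block for each eigenvalue, forcing $M \sim B$.

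For $B'$ I would invoke the elementary symmetry $M \mapsto (\alpha+\beta) I - M$. A short calculation (using that $cI_m - J_m \sim J_m(c)$ for every $c \in \K$, since $-J_m$ is similar to $J_m$) shows $(\alpha+\beta) I_{2n+1} - B \sim B'$. Now, if $B = \alpha P + \beta Q$ with $P,Q$ idempotents, then $(\alpha+\beta)I - B = \alpha(I-P) + \beta(I-Q)$ is again an $(\alpha,\beta)$-composite, so Remark~\ref{elremark}.(i) transfers this to $B'$. The main obstacle lies in Case~$B$: one needs $YX$ and $XY$ to be nilpotent of indices exactly $n$ and $n+1$, respectively, and the rectangular shift matrices above provide the minimal construction achieving this.
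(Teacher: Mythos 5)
Your proof is correct, but for $B$ and $B'$ it takes a genuinely different route from the paper's. The paper reuses the idempotents $P,Q$ already built for $A$ via Proposition \ref{not21} (noting that this construction gives the extra property $\im P\oplus\Ker Q=\K^{2n}$), splits the bordering column $C$ of $B=\begin{bmatrix} A & C\\ 0&\beta\end{bmatrix}$ as $\alpha C_1+\beta C_2$ with $C_1\in\im P$ and $C_2\in\Ker Q$, and then borders $P$ and $Q$ by one row and column to obtain idempotents for $B$; the case of $B'$ is handled symmetrically. You instead build the idempotents for $B$ from scratch by pushing the Section 5 block construction to rectangular off-diagonal blocks: with $X=\begin{bmatrix}I_n\\0\end{bmatrix}$ and $Y=\begin{bmatrix}0&I_n\end{bmatrix}$ one indeed gets $YX=J_n$, $XY=J_{n+1}$, the identity $(M-\alpha I)(M-\beta I)=\alpha\beta\,D(YX,XY)$ holds, and your Schur-complement count $\dim\Ker(M-\alpha I)=n-\rg(YX)=1$, $\dim\Ker(M-\beta I)=(n+1)-\rg(XY)=1$ together with the vanishing of $(M-\alpha I)^n(M-\beta I)^{n+1}$ and non-vanishing of $(M-\alpha I)^n(M-\beta I)^n$ pins down $M\sim J_n(\alpha)\oplus J_{n+1}(\beta)$; the reduction of $B'$ to $B$ via $M\mapsto(\alpha+\beta)I-M$ and $-J_m\sim J_m$ is also sound. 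The trade-off: the paper's bordering argument is shorter but leans on an unstated refinement of Proposition \ref{not21} (the position of $\im P$ and $\Ker Q$), whereas your construction is fully explicit and self-contained, effectively exhibiting an odd-size analogue of the normal form $\begin{bmatrix}\alpha I&N\\ I&\beta I\end{bmatrix}$, at the cost of a somewhat longer Jordan-type computation.
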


\begin{proof}
\begin{itemize}
\item Since $A$ is similar to the companion matrix $C\bigl((X-\alpha)^n\,(X-\beta)^n\bigr)$,
Proposition \ref{not21} proves that it is an $(\alpha,\beta)$-composite.
\item We can decompose
$$B=\begin{bmatrix}
A & C \\
0 & \beta
\end{bmatrix}, \quad \text{where $C=\begin{bmatrix}
0 \\
\vdots \\
0 \\
1
\end{bmatrix}
\in \Mat_{2n,1}(\K)$.}$$
We have found two idempotents $P$ and $Q$ such that $A=\alpha.P+\beta.Q$.
More precisely, the proof of Proposition \ref{not21} even provides $P$ and $Q$ with the additional
constraint: $\im P \oplus \Ker Q=\K^{2n}$.
We can then find two column matrices $C_1$ and $C_2$ such that
$$C_1 \in \im P, \quad C_2 \in \Ker Q \quad \text{and}\quad C=\alpha.C_1+\beta.C_2.$$
The matrices
$$P_1:=\begin{bmatrix}
P & C_1 \\
0 & 0
\end{bmatrix} \quad \text{and} \quad
Q_1:=\begin{bmatrix}
Q & C_2 \\
0 & 1
\end{bmatrix}$$
are then idempotents and satisfy $B=\alpha.P_1+\beta.Q_1$.
\item A similar argument proves that $B'$ is an $(\alpha,\beta)$-composite.
\end{itemize}
\end{proof}

Let now $M \in \Mat_n(\K)$ as in Proposition \ref{alphabeta}, and assume the two sequences
$(n_k)_{k \geq 1}=(n_k(M,\alpha))_{k \geq 1}$ and $(m_k)_{k \geq 1}=(n_k(M,\beta))_{k \geq 1}$
are intertwined.
Let $N_\alpha$ and $N_\beta$ denote the respective nilpotency indices associated to
the restriction of $M$ to $\Ker(M-\alpha.I_n)^n$ and $\Ker(M-\beta.I_n)^n$.
That the sequences $(n_k)_{k \geq 1}$ and $(m_k)_{k \geq 1}$ are intertwined shows that
$-1 \leq N_\alpha-N_\beta \leq 1$.
If $N_\alpha=0$ or $N_\beta=0$, then $M=\beta.I_n$ or $M=\alpha.I_n$ so $M$ is clearly an $(\alpha,\beta)$-composite.
Assume now that $N_\alpha \geq 1$ and $N_\beta \geq 1$.
Whether $N_\beta=N_\alpha$, $N_\beta=N_\alpha+1$ or $N_\beta=N_\alpha-1$, there is some matrix $M'$
such that $M$ is similar to either
$$\begin{bmatrix}
M' & 0 & 0 \\
0 & J_{N_\alpha}(\alpha) & 0 \\
0 & 0 & J_{N_\alpha}(\beta)
\end{bmatrix} \;, \;
\begin{bmatrix}
M' & 0 & 0 \\
0 & J_{N_\alpha}(\alpha) & 0 \\
0 & 0 & J_{N_\alpha+1}(\beta)
\end{bmatrix} \; \text{or} \;
\begin{bmatrix}
M' & 0 & 0 \\
0 & J_{N_\alpha}(\alpha) & 0 \\
0 & 0 & J_{N_\alpha-1}(\beta)
\end{bmatrix}.$$
In any case, we are reduced to proving that $M'$ is an $(\alpha,\beta)$-composite,
which follows easily by induction since $M'$ has its eigenvalues in $\{\alpha,\beta\}$ and the sequences
$(n_k(M',\alpha))_{k \geq 1}$ and $(n_k(M',\beta))_{k \geq 1}$ are easily shown to be intertwined.
This finishes our proof of Proposition \ref{alphabeta},
and all the theorems claimed in section 1 then follow.


\begin{thebibliography}{1}

\bibitem{Flanders}
H. Flanders, Elementary divisors of $AB$ and $BA$,
\newblock{\em Proc. Amer. Math. Soc.}
\newblock{\textbf{2}}
\newblock{(1951),}
\newblock{871-874.}

\bibitem{Gantmacher}
F. R. Gantmacher,
\newblock {\em The Theory of Matrices,}
\newblock{Vol 1, Chelsea, New York, 1960.}

\bibitem{HP}
R.E. Hartwig, M.S. Putcha, When is a matrix a difference of two idempotents?
\newblock{\em Linear and Multlinear Algebra}
\newblock{\textbf{26}}
\newblock{(1990),}
\newblock{267-277.}



\end{thebibliography}
\end{document}